\newcommand\marksymbol[2]{\tikz[#2,scale=1.2]\pgfuseplotmark{#1};}
\def\BibTeX{{\rm B\kern-.05em{\sc i\kern-.025em b}\kern-.08em
    T\kern-.1667em\lower.7ex\hbox{E}\kern-.125emX}}
\newtheorem{theorem} {Theorem}
\newtheorem{example} {Example}
\newtheorem{definition} {Definition}
\newtheorem{assumption} {Assumption}
\newtheorem{lemma} {Lemma}
\newtheorem{proposition}{Proposition}
 \newtheorem*{remark}{Remark}
\newcounter{subeqn} %
\newcommand{\ts}[1]{{\textnormal{#1}}}
\newcommand{\ie}{\emph{i.e.},\ }
\newcommand{\Nset}{\mathbb{N}}
\newcommand{\Rset}{\mathbb{R}}
\newcommand{\mc}{\mathcal}
\newcommand{\mb}{\mathbf}
\newcommand{\mbb}{\mathbb}
\DeclareFontFamily{U}{mathx}{\hyphenchar\font45}
\DeclareFontShape{U}{mathx}{m}{n}{
      <5> <6> <7> <8> <9> <10>
      <10.95> <12> <14.4> <17.28> <20.74> <24.88>
      mathx10
      }{}
\DeclareSymbolFont{mathx}{U}{mathx}{m}{n}
\DeclareMathSymbol{\bigtimes}{1}{mathx}{"91}
\begin{document}

\title{A Coalitional Game for Demand-Side Management in a Low-Voltage Resistive Micro-Grid with Multiple Electricity Retailers
\thanks{*Sponsored by Mexico's CONACyT, scholarship number: 440742.}
\thanks{This work was supported by EPSRC under Grants No EP/S001107/1 and EP/S031863/1.}
\thanks{F. Genis Mendoza, and P. R. Baldivieso-Monasterios are with the Department of Automatic Control and Systems Engineering, The University of Sheffield, Mappin Street, S1 3JD Sheffield, United Kingdom. 
{\tt\small \{fgenismendoza1, p.baldivieso\}@sheffield.ac.uk}}%
\thanks{G Konstantopoulos is with the Department of Electrical and Computer Engineering, University of Patras, Rion, 26500, Greece.
  {\tt\small{g.konstantopoulos@ece.upatras.gr}}}
\thanks{D. Bauso is with the Jan C. Willems Center for Systems and Control, ENTEG, Fac. Science and Engineering, University of Groningen, Nijenborgh 4, 9747 AG Groningen, The Netherlands and Dipartimento di Ingegneria, Universit\`a di Palermo, viale delle Scienze, Palermo, Italy
{\tt\small d.bauso@rug.nl}}%
}

\author{Fernando Genis Mendoza, Pablo R. Baldivieso-Monasterios*, Dario Bauso and George Konstantopoulos}

\maketitle

\begin{abstract}
  An existing challenge in power systems is the implementation of optimal demand management through dynamic pricing. This paper encompasses the design, analysis and implementation of a novel on-line pricing scheme based on coalitional game theory. The setting consists of a network with multiple energy retailers competing to attract consumers by announcing a price in a hierarchical leader-follower structure. The process of coalition formation under such a pricing scheme can be viewed as a game for which we show that a Stackelberg equilibrium exists, \ie given a price, consumers will respond by conforming to a reciprocal power consumption quantity. We propose a coalition formation algorithm and perform a game-theoretic stability analysis on the resulting coalitions. We integrate the pricing setting with a resistive micro-grid dynamic model. In this context we analyse the behaviour of the integrated system, bridging the gap between market and physical layers of the problem. Simulations provide a comparison of profits generated by the proposed scheme against a more traditional single retailer scheme, while simultaneously showing convergence towards a steady-state equilibrium. Additionally, we shed light into the system's physical response when subject to our proposed pricing scheme.
\end{abstract}

\begin{IEEEkeywords}
Micro-grid, coalitional games, Stackelberg equilibrium, on-line pricing, resistive network, stability. 
\end{IEEEkeywords}

\section{Introduction}
With the advent of the smart grid paradigm, more ways to distribute power have emerged, bringing changes to the electricity market. Here, governments and general consumers now seek and switch to better providers and sources of power that enable them to fulfill certain requirements while still being profitable. Such paradigm has also enabled a higher degree of communication between consumers and energy retailers, providing new set-ups where these interact and cooperate to optimise their outputs. Dynamic pricing schemes in electrical systems establish a viable way to optimise and shift consumption during peak times without compromising generation and distribution systems \cite{Namerikawa}. These schemes allow retailers to make tariff changes based on demand whilst helping users minimise costs of their consumption \cite{Loni2017}. An underlying assumption is that both suppliers and consumers are rational, namely they aim to optimise their behaviour, and as consequence, a price change entails a change in consumption \cite{Namerikawa}. 



The landscape of dynamic pricing algorithms in the literature is vast. In this brief literature review, we focus on describing pricing mechanisms in game-theoretic terms. The framework of cooperative games, as mentioned in \cite{Loni2017}, provides a natural method for describing and modeling pricing in electric networks. The use and formulation of cooperative coalitional games in micro-grids was first introduced in \cite{Saad2011}, where the proposed algorithm focuses on reducing power losses and costs by forming coalitions of neighboring micro-grids. A game where local micro-grids cooperate without participation of a main grid is presented in \cite{Chis2019} by assuming a known non-flexible demand. In \cite{Mei2019} the introduction of auction theory is used to define the pairing of micro-grids that buy and sell. In \cite{Wei2014}, the authors propose forming coalitions in a macro station, and using the Shapley value to distribute profits. A similar approach is used in \cite{Pilling2017}, where a case study is performed with data of an existing micro-grid and a utility grid with fictitious interconnections. In \cite{Xu2014} and \cite{Ghiasvand2022}, we can see, from a numerical point of view, the relation between coalition formation and changes in energy prices. A study of the case where greedy prosumers do not align with the micro-grid's decision is presented in \cite{Han2019}, here a balanced game is proposed without the need of calculating the imputation set. A notion of fairness is introduced by \cite{Moafi2023} using particle swarm optimisation and using the nucleolus as a game solution concept. A bidding system for cooperating prosumers is presented in \cite{Chakraborty2019}, however constraints on power capacity and losses are ignored. The use of evolutionary game theory in conjunction with coalitions is proposed in \cite{Mondal2017}, where the price is a quadratic function of the consumption. In summary, coalitional game theory represents a powerful modelling tool to understand and enhance pricing behaviours within an electric network.

We have, however, identified two fronts on which the existing body of literature has had limited contributions. Existing approaches often neglect the effect of pricing on the physical network. Additionally, with a few exceptions \cite{Chis2019,BaldiviesoMonasterios2022,Tarashandeh2024}, the coalitional games proposed in the literature do not consider end consumers as players. In this paper, we aim to tackle both of these challenges. The pricing scheme is modelled as a Stackelberg-based game with incentive strategies where both \emph{retailers} and \emph{consumers} are players. Retailers adjust their prices periodically, which in consequence will cause them to lose or gain consumers, and at the same time, adjust the demand and supply of energy in the network. We build upon our previous results in \cite{GenisMendoza2021} to analyse the game equilibria and coalition stability properties. We employ the conductance matrix of the network as a tool in a cost-saving coalitional game; this, to the best of the author's knowledge, has not been done before. From the physical perspective, we analyse the interactions between dynamic pricing and a modified droop based control laws governing the voltage dynamics. We leverage on the bounded integral control theory from \cite{Konstantopoulos2019a}. Our approach aims to provide a rigorous framework for modern energy trading platforms such as \cite{GoodEnergyLTD2020} and \cite{UswitchLimited}.

The contributions and structure of this paper are:
\begin{itemize}
\item We propose a coalitional game framework in Section~\ref{p4sec:game}, where there are multiple competing retailers in a micro-grid. To the best of the authors' knowledge, this is a novel problem setup in micro-grid literature. We analyse the stability, in a game-theoretic sense, of the coalitions generated by our algorithm. Additionally, we introduce the basis for a novel method where a cost-saving game is linked and formulated directly from the network conductance matrix.
\item In Section~\ref{p4sec:stats}, we show how consumers can improve their welfare by sharing risks. The analysis further stresses the benefits of the coalitional approach in a pricing setting.
\item We derive in Section~\ref{p4sec:alltogether} stability conditions, from a dynamical systems perspective, for the physical system subject to our pricing algorithm. To the best of the author's knowledge, this is the first instance of such combined analysis.
\end{itemize}
%
%
\textbf{\emph{Notation and  definitions}}: A \emph{game} $(\mc{N},\{\mc{H}_i\}_{i\in\mc{N}},\{u_i\}_{i\in\mc{N}})$ is a triplet composed by a set of players $\mc{N}$, a collection action sets $\mc{H}_i$, and a collection of payoff functions $u_i\colon\mc{H}_i\to\Rset$. An \emph{action profile} for a set of $|\mc{N}|$ players is an ordered tuple $(h_1,\ldots,h_{|\mc{N}|})$. The \emph{best response set} of player $i\in\mc{N}$ is defined as $\mc{Q}_i(h_{-i}):= \arg\max\{ u_i(h_i,h_{-i})\colon h_i \in \mathcal{H}_i\}$ where $h_{-i} = (h_j)_{j\in\mc{N}\setminus\{i\}}$. For $\mc{N} = \{1,2\}$, an \emph{action profile} $(h_1^S,h_2^S)$ is a \emph{Stackelberg Equilibrium} for player 1 if $h_2^S\in\mathcal{Q}_2(h_1^S)$ and $u_1(h_1^S,h_2^S)\geq u_1(h_1,h_2)$, $\forall h_1\in\mathcal{H}_1,h_2\in\mathcal{Q}_2(h_1)$. A coalitional game is defined by the pair $(\mathcal{N},\nu)$ with $\mc{N}$ a set of players and $\nu\colon 2^{\mathcal{N}}\to\Rset$ its characteristic function. A pay-off vector is an element $a\in\Rset^{|\mc{N}|}$ such that $a_i\in\Rset$ represents the gains of player $i\in\mc N$. The \emph{core}, $\mc{C}\subset\Rset^{|\mc{N}|}$, of a coalitional game $(\mc{N},\nu)$ satisfies $\mc{C} = \{a\in\Rset^{|\mc{N}|}\colon \sum_{i\in S}a_i\geq\nu(S),~\forall S\subset\mc{N}\}$. A cooperative game $(\mc{N},\nu)$ with $\nu(\emptyset)= 0$ is: a \emph{convex game} if $\nu(S\cup T)+\nu(S\cap T)\leq \nu(S)+\nu(T)$ $\forall S,T\subseteq \mc{N}$; a \emph{permutationally convex} (PC)  game if there exists a permutation $\pi$ of $\mc N$ such that $\forall S\subseteq \mc{N}\setminus [k]$ and $k> j$, $\nu([k]\cup S)-\nu([k])\leq \nu([j]\cup S)-\nu([j])$ where $[k] = \{k\}\cup\{i\in\mc{N}\colon \pi(i)<\pi(k)\}$.

A power network is defined by a connected, undirected, and weighted graph $\mc{G} = \mc{(N,E)}$, where $\mc{N}$ is the set of nodes, and $\mc{E}\subseteq\mc{N}\times\mc{N}$ the set of edges defining the interconnection topology. A $k-$path from node $i$ to $j$ is a sequence of $k$ edges $\phi(i,j) = \{e_1,\ldots,e_k\}\subset\mc{E}$. The mapping $\omega\colon\mc{E}\to\Rset$ defines the weight of each edge such that  $\omega(i,j) = \omega_{ij}\in\Rset$. For undirected graphs, the mapping $\omega(\cdot,\cdot)$ is symmetric and can be characterised by a symmetric adjacency matrix $A\in\Rset^{|\mc{N}|\times|\mc{N}|}$. The matrix $B\in\Rset^{|\mc{N}|\times|\mc{N}|}$, such that $B_{ij} = 1$ if $A_{ij}>0$ and $B_{ij} = 0$ otherwise, and $A$ determine the connectivity properties of $\mc{G}$. From \cite{bullo}, $B^k$ and $A^k$ with $k\in\Nset$ determine the number of $k-$paths and the sum of products of weights of all $k-$paths respectively from nodes $i$ to $j$. The total weight of $\mathcal{G}$ is $\omega(\mathcal{E})=\sum_{e\in\mathcal{E}} \omega_e = \frac{1}{2}\sum_i\sum_jA_{ij}$. The out degree and Laplacian matrices of $\mc{G}$ are $D\coloneqq \ts{diag}(\sum_{j=1}^N A_{ij})$, and $\mc{L} = D-A$ respectively. The neighbours of node $i$ is $\mc{N}_i = \{j\in\mc{N}\colon\exists (i,j)\in\mc{E}\}$; the out-degree of node $i$ is $\delta_i = |\mc{N}_i|$. A tree is an undirected graph in which any two vertices are connected by a sequence of edges. The \emph{Minimum Spanning Tree} (MST) of $\mathcal{G(N,E)}$ is a tree $T=(\mathcal{N, E^*})$ such that ${\mc{E}^*} = {\arg\min}\{\omega(\mc{E}')\colon \mc{E}'\subset\mc{E},\forall i\in\mc{N},\exists j\in\mc{N}, (i,j)\in\mc{E}'\}$.
\section{System Model, Definition and Preliminaries} \label{p4sec:models}
In this section, we make precise the notions of retailers and consumers. In our demand-side management problem, electricity prices regulate user consumption. The consumer response to announced prices involves a hierarchical structure fitting within a Stackelberg game framework \cite{Bausoa}. In this setting, \emph{leaders}, (retailers) play first by selecting a price; and a subset of \emph{followers} (consumers) select their power consumption at a given price. When consumers and retailers cannot obtain more benefit by modifying consumption or price, then the game is at an equilibrium.

%
%
%
 \subsection{Coalitional setting} \label{p4sec:setscoalitions}
 To study the coalitional behaviour of our scheme, we employ a game-theoretic framework. The set of players in the micro-grid is $\mc{N} = \{1,\ldots,N\}$ and involves $N$ players.  This is partitioned into two non-overlapping sets: the set of retailers $\mathcal{R}\subset\mathcal{N}$ and the set of consumers $\mathcal{B}\subset\mathcal{N}$ with  $\mathcal{R}\cup\mathcal{B}=\mathcal{N}$ and $\mc{R}\cap\mc{B}=\emptyset$. Since $\mc{R}$ and $\mc{B}$ form a partition of $\mc{N}$, then $N = |\mc{B}| + |\mc{R}|$ where $|\mc{B}|$ is the number of consumers, and $|\mc{R}|$ is the number of retailers. Besides this basic partition, we seek a pairing between a retailer and a subset of consumers.
 \begin{definition}[Retailer's coalition] \label{p4def:coal}
   For $r\in\mc{R}$, the \emph{retailer coalition} is $S_r\coloneqq\{r\} \cup \mc{B}_r$ where $\mc{B}_r = \{b_1,\ldots,b_k\}\subset\mc{B}$.
\end{definition}
The above definition assigns to each element in the retailer set, a corresponding subset of consumers; this definition considers the case where a retailer $r\in\mc{R}$ does not succeed in attracting any consumers, \ie $\mc{B}_r = \emptyset$ and $S_r = \{r\}$. We invoke the following assumptions:
\begin{assumption}\label{p4ass:union}
  The collection of sets $\{S_r\}_{r\in\mc{R}}$ forms a covering of $\mc{N}$, \ie $\bigcup_r S_r=\mathcal{N}$.
\end{assumption}
A consequence of Assumption~\ref{p4ass:union} is that each consumer $b\in\mc{B}$ needs to be assigned to one retailer. Coalitions that have more than one retailer or that share consumers are considered nonviable, this is formalised as follows.
\begin{assumption}\label{p4ass:coalcond}[Viable coalitions]
  \begin{enumerate}
  \item\label{p4eq:nosellers} For $r\in\mc{R}$, its associated retailer coalition $S_r\subset\mc{N}$ satisfies $\forall s\in\mc{R}\setminus\{r\}$, $S_r\cap \mc{R}\setminus\{s\} = \emptyset$.
  \item\label{p4eq:nooverlap} For any $r,s\in\mc{R}$, $S_r\cap S_s = \emptyset$.
  \end{enumerate}
\end{assumption}
Coalitions satisfying Assumptions~\ref{p4ass:union} and \ref{p4ass:coalcond} are considered viable coalitions and exclude all those containing more than one or no retailer.
\subsection{Consumer and Retailer Profit Functions}
\label{p4sec:players}
In our problem setting, both consumers and retailers are considered to be price-taking rational agents, \ie both aim to maximise their profit for producing or consuming energy. The profit function for a retailer $r\in\mc{R}$ is $ \Pi _{r} (S_r,\lambda_r,P)= {\lambda_r} P - C_r((1+\alpha_r^{loss})P,S_r),$  where $C_r(\cdot,\cdot)$ is a function corresponding to the cost of producing $(1+\alpha_r^{loss})P$ units of power distributed among consumers in $S_r$ at a price $\lambda_r$ and a network dependent loss coefficient $\alpha_r^{loss}\in[0,1]$. Similarly, every consumer $b$ that has opted to consume from~$r$ calculates its profit as $\Pi _{b}(\zeta,r)= U_b(\zeta,r) - {\lambda_r}\zeta$ where $U_b(\cdot,\cdot)$ is the utility from consuming $\zeta$ amount of power. Following~\cite{Namerikawa}, we impose restrictions on the choice of functions:
 \begin{assumption}
   For each $(r,b)\in\mc{R}$ and $b\in\mc{B}$, the function $C_r(\cdot,S_r)\colon\Rset\to\Rset$  ($U_b(\cdot,r)\colon\Rset\to\Rset$) is continuous, monotonically increasing, and convex (concave).
   \label{assum:costs}
 \end{assumption}
 Each retailer in our setting seeks to maximise its profits by selling power to a subset of consumers $\mc{B}_r$ at a price ${\lambda_r}$. Similarly, each consumer seeks to maximise its benefit by consuming ${P^{d}_{{b}}}$ power from a retailer $r\in\mc{R}$. This process is captured by the following coupled optimisation problems: 
 \begin{subequations}
   \begin{align}
     \mbb{P}_r(P^d_{\mc{S}_r}) \colon &\mathop {\max }\limits_{{B}\subset\mc B,~\lambda  \in [\underline \lambda  ,\bar \lambda ]}~ ~\Pi _{r} (\{r\}\cup B,\lambda,\sum_{b\in {B}} P_{{b}}^d),\label{p4eq:optprice_mi}\\
     \mbb{P}_b(r) \colon &\mathop {\max }\limits_{\zeta  \in [\underline \zeta  ,\bar \zeta ]}~\Pi_{b}(\zeta,r),\label{p4eq:optcons_mi}
   \end{align}\label{p4eq:opt_mi}
 \end{subequations}
\noindent where $P^d_{\mc{S}_r} = \{P_b^d\}_{b\in\mc{S}_r}$ is the collection of power demands in the coalition of $r$ and $[\underline \lambda  ,\bar \lambda ]\subset\Rset$ and $[\underline \zeta  ,\bar \zeta ]\subset\Rset$ are the price and power consumption limits respectively. Each retailer $r\in\mc{R}$ cannot provide power to the consumers beyond its own generation capabilities
 \begin{equation}
   (1+\alpha_r^{loss})\biggr|\sum_{b\in S_r} P^d_{{b}}\biggl|\leq P_r^\ts{max}.
   \label{p4eq:powerlimit}
 \end{equation}
The optimisation problems defined in \eqref{p4eq:opt_mi} are of mixed-integer type which makes their use in an online setting problematic. This problem, however, is of interest conceptually because its solution yields both the optimal price for each retailer $\lambda_r^*$, the optimal consumption for each consumer ${P_b^d}^*$, and the optimal partition $\{\mc{B}_r^*\cup\{r\}\}_{r\in\mc R}$ of $\mc N$. 
\section{Coalitional Game with Multiple Retailers} \label{p4sec:game}
In this section, we propose an algorithmic way of handling the coupled optimisation problems $\mbb{P}_r(\cdot)$ and $\mbb{P}_b(\cdot)$ for each $r,b\in\mc{N}$. In fact, Algorithm~\ref{p4alg:alg1} defines the coalition formation procedure that employs game-theoretic concepts to achieve a Stackelberg equilibrium of a cooperative game. We show that the outcome of this coalition formation procedure is in fact a solution to \eqref{p4eq:opt_mi} and is a stable solution of the game. 
\subsection{Cost Definition and Minimum Spanning Tree Problem} \label{p4sec:networks}
For a retailer $r\in\mc{R}$, its associated coalition $S_r\subset\mc{N}$ induces a connection sub-graph $\mc{G}_r = (S_r,\mc{E}_r)$ denoted here as \emph{retailer's cost network}. The set of edges $\mc{E}_r$ comprises all links of the form $(r,b)$ with $b\in\mc{B}_r$; the associated weights $\omega(r,b)$ determine the \emph{direct connection} costs and may represent different factors such as physical position with respect to the supplier, power losses, or fees imposed. The links of the type $(b,d)$ with $b,d\in\mc{B}_r$ and their associated weights represent \emph{aggregate connection} and may represent benefits of joining a coalition. The total value $c\colon2^{\mc{N}}\to\Rset$ of coalition $(S,\mc{E}_S)$ is given by the MST, \ie $c(S) = \omega(\mc{E}^*_S)$.
%
%
%
\begin{figure}[t!]
\centering
\begin{tikzpicture}
\pgfdeclarelayer{bg}    
\pgfsetlayers{bg,main}
\draw[ultra thick] (-1,4.75) circle (.3) node[anchor=center] {$b_3$};
\draw[ultra thick](-0.7,7.25)to (0.7,6.3);
\draw[dashed](-0.7,7.15)to (0.65,6.25);
\draw[ultra thick] (-1,7.25) circle (.3) node[anchor=center] {$b_1$};
\draw[ultra thick] (-3.5,6) circle (.3) node[anchor=center] {$r_1$};
\draw (-2.45,7.05) node[anchor=center] {100};
\draw (0.15,6.95) node[anchor=center] {30};
\draw (-2,6.1) node[anchor=center] {80};
\draw (-0.75,6.65) node[anchor=center] {40};
\draw (-2.55,5.05) node[anchor=center] {90};
\draw[ultra thick] (0.7,6) circle (.3) node[anchor=center] {$b_2$};
\draw[ultra thick](-1.3,7.25)to (-3.5,6.3);
\draw[ultra thick](-1,5.05)to (-1,6.95);
\draw[dashed](-1.1,5.05)to (-1.1,6.95);
\draw[ultra thick](-3.5,5.7)to (-1.3,4.75);
\draw[dashed](-3.2,5.8)to (0.4,5.8);
\draw[ultra thick](0.4,5.9)to (-3.2,5.9);
\begin{pgfonlayer}{bg}
  \draw[red!50!black] [thick ] (3.1,5.95) circle (.3) node[anchor=center] {$r_j$};
  \draw[red!50!black] [thick](-0.8,7.5)to (3.1,6.25);
  \draw[red!50!black] [thick](-0.8,5)to (0.55,5.75);
  \draw[red!50!black] [ thick](-0.8,7.05)to (0.45,6.15);
  \draw[red!50!black] [ thick](3.1,5.65)to (-0.75,4.6);
  \draw[red!50!black] [thick](1,5.95)to (2.8,5.95);
  \draw[red!50!black] (2,6.95) node[anchor=center] {$(r_j,b_1)$};
  \draw[red!50!black] (1.95,6.15) node[anchor=center] {$(r_j,b_2)$};
  \draw[red!50!black] (2.1,5) node[anchor=center] {$(r_j,b_3)$};
  \draw[red!50!black] (0.45,5.25) node[anchor=center] {$(b_2,b_3)$};
  \draw[red!50!black] (-0.35,6.2) node[anchor=center] {$(b_1,b_2)$};
\end{pgfonlayer}
\end{tikzpicture}
\caption{Example cost network from retailer $r_1$, and its MST (dashed). Other retailers $r_j$ also assign similar costs (red).}
\label{p4fig:examplegraph}
\end{figure}
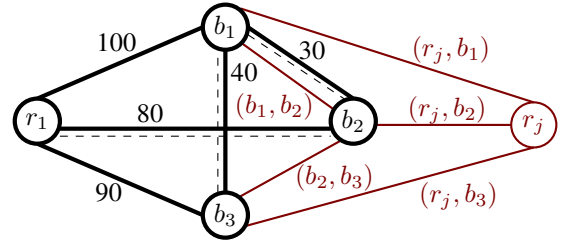
\begin{example}[MST for a coalition ${S}_r$]
  Consider a coalition $S_{r_1}=\{r_1,b_1,b_2,b_3\}$, with a cost network given in Fig.~\ref{p4fig:examplegraph}. This network consists of five different edges $\mc{E}_{r_1}=\{(r_1,b_1),(r_1,b_2),(r_1,b_3),(b_1,b_2),(b_1,b_3)\}$. From the graph, there are seven possible trees that contain all nodes. The MST is found to be the one containing the edges $\{(r_1,b_2),(b_1,b_2),(b_1,b_3)\} $, yielding the cost of the coalition as $c(S_{r_1}) = 150$.
\end{example}
Now we can define the characteristic function of the cooperative game $(\mc{N},\nu)$ defined for all viable coalitions $S_r$:
\begin{equation}
\nu({S_r}) = \sum\limits_{b \in {\mc{B}_r}} \omega( {r},{b})  -  c (S_r).
\label{p4eq:coalvalue}
\end{equation}
%
The value for nonviable subsets, see Assumption~\ref{p4ass:coalcond}, $X\subset\mc{N}$ is  $\nu(X) = 0$. Our goal is to design a cost-saving game which creates an incentive for consumers to join a larger coalition to increase their profits by means of savings. This requirement can be formalised by requiring that savings are larger in larger coalitions.
\begin{assumption}[Coalition savings]
  For a retailer $r\in\mc{R}$, if its associated coalition $S_r$ consists of two or more consumers, then for all $b \in S_r$, $\nu(\{r,b\})\leq \nu(S_r)$.
  \label{assum:savings}
\end {assumption}
In coalitional game theory, the concept of Shapley value is used to define fair imputations \cite{Bausoa}. Consider that consumers enter a coalition $S_r$ in a given order $\sigma = \{r,b_{i_1}\ldots b_{i_k}\}$ where the ordering number of buyer $b_{i_k}$ is denoted as $\sigma^{-1}(b_{i_k})$. For example, if $S_r = \{r,b_3,b_4,b_6\}$ with the ordering $\sigma = (r,b_4,b_3,b_6)$, then $\sigma^{-1}(b_4) = 2$. The set of predecessors of $b$ is $\rho_{b}^\sigma\coloneqq\{s\in S_r\colon \sigma^{-1}(s)<\sigma^{-1}(b)\}$. As a result, the marginal value for $b$ given a sequence $\sigma$ is $m_{{b}}^\sigma = \nu(\rho_{{b}}^\sigma  \cup \{ {b}\} ) - \nu(\rho_{{b}}^\sigma)$ and in vector form  $m^\sigma = (m^\sigma_{b_1},\ldots,m^\sigma_{b_k})$. The \emph{Shapley value} is then calculated as the average of the marginal vector over all permutations of sequences, $\Phi (\nu) = \frac{1}{{k!}}\sum_\sigma  {m^\sigma}$ where $k$ is the total number of consumers in the coalition. The resulting vector outputs the corresponding portion of savings imputed to each consumer $b\in S_r$.
%
\subsection{Retailer's Cost Network Derivation}
A remaining issue is determining the weights of the retailer network $(S_r,\mc{E}_r)$ for $r\in\mc{R}$. For a given $r\in\mc{R}$, we propose the following method to compute the cost network:
\begin{enumerate}
\item \emph{Obtain the adjacency matrices} $A$ and $B$ from the network Laplacian $\mc{L}\in\Rset^{N\times N}$, 
\item The weight $\omega(r,b)$ is the geometric average of all $n-$paths from $r$ to $b$ within the micro-grid $\mc{G}$. Given $\gamma, \xi >0$ and setting $n = 1$,
\[\omega(r,b)=\gamma \biggr(\frac{[A^n]_{rb}}{[B^n]_{rb}}\biggl)^{\frac{1}{n}} + n\xi,\]if $[A^n ]_{rb}= 0$ then set $n = n +1$. Otherwise terminate. 

\item For a pair of consumers $(b_i,b_j)$, the weight is $\omega(b_i,b_j) = \gamma A_{{b_i}{b_j}} +\beta\xi$ if $A_{{b_i}{b_j}}>0$ where $\beta>0$ and $\beta\neq\xi$, and $\omega(b_i,b_j)=0$ otherwise.
\end{enumerate}
The scalars $\gamma$, $\xi$, and $\beta$ represent prices related to edge conductance, direct connection fees, and aggregate connection fees respectively. We note that the cost increases linearly with the number of walks needed to connect two nodes.
\subsection{Price, Consumption, and Coalition Formation}
\label{p4sec:algo}
In this section, we define both cost $C_r(\cdot,\cdot)$ and utility $U_b(\cdot,\cdot)$ function for retailers and consumers respectively. We also establish a link between these functions and those arising from the supplier cost network and how the combination of both leads to a solution of the coalition formation problem. We propose for each $r\in\mc{R}$ and $b\in\mc{B}$ the following cost and utility functions
\begin{subequations}
  \begin{align}
    C(P,S_r)&={\alpha_{{r}}}{(\lambda{P})^2}+\nu(S_r),\label{p4eq:cost}\\
    U(P^d_{b},r)&={ \alpha_{{b}}}{(P^d_{{b}})^{\frac{1}{6}}}+\kappa_{{r}}\delta_{{b}}^{{r}}-\omega(r,b)\label{p4eq:util}
  \end{align}
  \label{eq:cost_util}
\end{subequations}
where $\alpha_r>0$ and $\alpha_b>0$. For the former, the first term is related to the generation costs for each retailer and the second is the savings from the consumers connected to $r\in\mc{R}$ which will be rewarded back via imputation. The utility function employs a concave function similar to that of \cite{GenisMendoza2019,GenisMendoza2022}; a base payment $-\omega(r,b)$ for joining $S_r$; and a \emph{subsidy} ${\kappa_{{r}}\delta_{{b}}^{{r}}}\geq 0$ to consumer $b$ from retailer $r$. The subsidy term is equivalent to the Bahnzaf power index employed in cooperative games dictating how pivotal is a player within a coalition, see \cite{Algaba2019}. A form of the subsidy term is also used in \cite{Namerikawa} as an incentive tool. The potential subsidy is announced by the retailer to the consumer as an incentive for the latter to join its coalition, since in MST games the players with more connections in the cost network can potentially hold more value \cite{Bausoa}, enabling consumers to consequently increase their profits. The subsidy is calculated after the coalition is formed; its value is equal to each consumer's Shapley value $\Phi(\nu)$ imputation. The following lemma provides some properties of both functions.
\begin{lemma}
  For any $r\in\mc{R}$, the functions $C(\cdot,S_r)$ and $U(\cdot,r)$ defined in \eqref{eq:cost_util} satisfy Assumption~\ref{assum:costs}.
  \label{lem:cost_util}
\end{lemma}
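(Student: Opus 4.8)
The plan is to verify, for each of the two functions, the three properties required by Assumption~\ref{assum:costs}: continuity, monotonic increase, and the appropriate convexity/concavity. Since both $C(\cdot,S_r)$ and $U(\cdot,r)$ are defined on $\Rset$ with all other arguments ($S_r$, $r$, and hence the constants $\nu(S_r)$, $\kappa_r\delta_b^r$, $\omega(r,b)$) held fixed, the task reduces to analysing two single-variable functions of the power argument. The additive constants play no role in monotonicity or convexity, so I would first isolate the variable terms: $\alpha_r(\lambda P)^2$ for the cost and $\alpha_b (P_b^d)^{1/6}$ for the utility.

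For the cost function $C(\cdot,S_r)$, I would write it as $f(P)=\alpha_r\lambda^2 P^2 + \nu(S_r)$ and note that $\alpha_r>0$ and $\lambda\geq\underline\lambda$ is bounded away from zero (or at least nonnegative over the admissible price range), so the leading coefficient is positive. Continuity is immediate since $f$ is polynomial. For monotonic increase on the relevant domain, I would compute $f'(P)=2\alpha_r\lambda^2 P$, which is nonnegative for $P\geq 0$; since power demands are nonnegative (consumption lies in $[\underline\zeta,\bar\zeta]$ with the physical understanding that the argument is nonnegative), $f$ is increasing there. Convexity follows from $f''(P)=2\alpha_r\lambda^2\geq 0$ throughout.

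For the utility function $U(\cdot,r)$, I would set $g(P)=\alpha_b P^{1/6}+\kappa_r\delta_b^r-\omega(r,b)$ and restrict attention to $P\geq 0$, where the fractional power is well defined. Continuity on $[0,\infty)$ is standard. Monotonic increase follows from $g'(P)=\tfrac{1}{6}\alpha_b P^{-5/6}>0$ for $P>0$ with $\alpha_b>0$. Concavity follows from $g''(P)=-\tfrac{5}{36}\alpha_b P^{-11/6}<0$ for $P>0$. Thus $g$ is continuous, increasing, and strictly concave on the positive reals.

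The main obstacle, and the point I would be most careful to address explicitly, is the domain and the behaviour at the boundary point $P=0$: the derivatives of the $P^{1/6}$ term blow up as $P\to 0^+$, so strictly speaking monotonicity and concavity are established on the open interval $(0,\infty)$ and extended to $0$ by continuity, and one must confirm that the admissible consumption interval $[\underline\zeta,\bar\zeta]$ lies in the nonnegative reals so that these sign conditions genuinely hold. I would state at the outset that the relevant physical domain is $P\geq 0$, which legitimises both the evaluation of the fractional power and the sign determinations of the first and second derivatives; once this is fixed the remaining computations are routine derivative checks.
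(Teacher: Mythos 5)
The paper states this lemma without proof---the verification is treated as routine---so there is no argument of record to compare against; your proof supplies exactly the expected one, and it is correct: with the constants $\nu(S_r)$, $\kappa_r\delta_b^r$, $\omega(r,b)$ stripped away, everything reduces to derivative checks on $\alpha_r\lambda^2P^2$ and $\alpha_b P^{1/6}$. Your insistence on fixing the domain first is in fact the one point where you are more careful than the paper: as literally written, Assumption~\ref{assum:costs} asks for functions on all of $\Rset$, yet $\alpha_r(\lambda P)^2$ is not monotonically increasing on $\Rset$ and $P^{1/6}$ is not even real-valued for $P<0$, so the claim can only hold after restricting to the physical domain $P\geq 0$ (equivalently, requiring $[\underline\zeta,\bar\zeta]\subset[0,\infty)$, which is consistent with the simulation parameters where $\underline\zeta=0$). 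With that restriction made explicit, your boundary treatment at $P=0$---establishing monotonicity and concavity of $P^{1/6}$ on $(0,\infty)$ via $g'>0$, $g''<0$ and extending by continuity---closes the only delicate step, and the rest is immediate.
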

%

Our approach aims to solve the optimisations~\eqref{p4eq:opt_mi} in an iterative way. A consumer $b\in\mc{B}$ evaluates prices $\lambda_r$, base payments $\omega(r,b)$, and potential subsidies $\kappa_{r}\delta_{{b}}^{{r}}$ for all $r\in\mc{R}$. Consumer $b$ chooses $r^*$ and then optimises its profit $\Pi_b(\cdot,r^*)$ to yield ${P_b^d}^*$ and reports to $r^*$. Each retailer evaluates submissions from consumers according to its generation constraints \eqref{p4eq:powerlimit}: if the constraint is violated, then retailer $r$ rejects consumers with the least profit in its coalition. When a consumer $b$ is rejected by $r$, then it chooses a different coalition from a retailer contained in the set $\mc{R}\setminus\{r\}$. After all consumers have chosen their retailers, savings are distributed via the Shapley value. This is repeated \emph{ad infinitum}.
%
%
\begin{algorithm}[t!] \label{p4alg:alg1}
  \SetAlgoLined
  \KwData{$\forall r\in\mc{R}$: $\tilde{S}_r$, $P_r^g$, and $\Lambda_r$;  $\forall b\in\mc B$, $P_b^d$}
  \Repeat{$\{S_r\}_{r\in\mc{R}}$ is unchanging}{
  $\forall r\in\mc{R}$, \emph{measure} $\sum_{{b}\in S_r} P_{b}^d$ and \emph{evaluate} \eqref{p4eq:optprice_mi} to obtain $\lambda_r$\;
    $\forall r \in \mc R$, Broadcast $\lambda_r$ and $\kappa_{r}\delta_{{b}}^{{r}}$ to all $b\in\mc{B}$\;
    $\forall b\in\mc B$, solve $\max\{\Pi_b(\zeta,r)\colon \zeta\in[\underline{\zeta},\bar{\zeta}],~r\in\mc R\}$\;
    $\forall b\in\mc B$, announce ${P_b^b}^*$ to $r^*=\arg\max\Pi_b(r,\zeta)$\;
    \Repeat{no consumer $b\in\mc B$ is rejected}{
      \If{for $r\in \mc R$, \eqref{p4eq:powerlimit} does not hold}{
        \Repeat{\eqref{p4eq:powerlimit} holds}{
          $r$ rejects $\hat{b}\in S_r$ with the lowest $P_b^d$\;
        }
        Each rejected $\hat{b}\in\mc{B}$ by $r\in\mc{R}$ solves $\max\{\Pi_{\hat{b}}(\zeta,s)\colon \zeta\in[\underline{\zeta}, P_s^g+\sum_{b\in S_s}P_b^d],~s\in\mc R\setminus\{r\}\}$\;
        Each $\hat{b}\in\mc B$ announces its new coalition $\hat{r}^*$ and demand ${P_{\hat{b}}^d}^*$\;
      }
    }
    }
    $\forall r\in\mc R$, calculate $\nu(S_r)$ and $\Phi(\nu)$\;
    $\forall r\in\mc{R}$, distribute $\Phi(\nu)$ among $b\in S_r\cap\mc B$\;
    $\forall b\in B$, consume agreed demand ${P_b^d}^*$\;
\caption{Coalition Formation}
\end{algorithm}

\subsection{Properties and Stability of the Coalitional Game}
\label{p4sec:mcstvspc}
In this section, we show that coalitions formed using Algorithm~\ref{p4alg:alg1} are stable in a game-theoretic sense. Our proposed algorithm leads to a Stackelberg equilibrium between each retailer and their respective consumers. For $r\in\mc{R}$ and $S_r\subset\mc{M}$, the MST induces a partial order in the elements of $S_r$ with respect to $r$, \ie $b\succ_{r} d$ if $d$ lies in the path connecting $b$ to $r$. We use this order relation to define a PC game. However, we will first investigate existence of a solution for the \emph{minimum cost spanning tree} (MCST) game, namely the existence of a non-empty core given a retailer's coalition $S_r$. Non-emptiness of the core is a known property of convex games; moreover, \cite{Granot1982} has shown that MCST and convex games are PC games and all PC games possess a non-empty core. We are now ready to present the following results:
%
\begin{theorem}
  Suppose Assumptions~\ref{p4ass:union} and \ref{p4ass:coalcond} hold. For $r\in\mc{R}$, the  MCST $( S_r,c)$ is a PC  game with $c\colon 2^{S_r}\to\Rset$ an MST cost.
\label{thm:retailer_PC}
\end{theorem}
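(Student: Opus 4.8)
The plan is to recognise $(S_r,c)$ as a \emph{minimum cost spanning tree} game in which the retailer $r$ plays the role of the fixed source node and the consumers in $\mc{B}_r$ are the agents that must be connected to it. Under this identification the result of \cite{Granot1982}, asserting that every MCST game is permutationally convex, applies; the substance of the proof is therefore to (i) verify that $c$, defined by $c(T)=\omega(\mc{E}^*_T)$ for $T\subseteq S_r$, is genuinely an MCST cost rooted at $r$, and (ii) exhibit an explicit permutation $\pi$ of $S_r$ witnessing the PC inequality. For (i) I would observe that Assumptions~\ref{p4ass:union} and \ref{p4ass:coalcond} guarantee each viable coalition contains exactly one retailer, so $r$ is the unique source in $S_r$ and every $b\in\mc{B}_r$ is connected to $r$ through $\mc{E}^*_{S_r}$; hence $c(T)$ is precisely the least total weight needed to span $T$ while keeping $r$ connected, which is the defining feature of an MCST cost.

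For step (ii), the permutation comes from the partial order $\succ_r$ already introduced, under which $b\succ_r d$ whenever $d$ lies on the tree path from $b$ to $r$. I would take $\pi$ to be any linear extension of $\succ_r$ that assigns $r$ the first position, for instance a breadth- or depth-first ordering of the rooted MST $(S_r,\mc{E}^*_{S_r})$ emanating from $r$. The essential structural observation is that each initial segment $[k]=\{k\}\cup\{i\colon\pi(i)<\pi(k)\}$ is then \emph{downward closed} with respect to $\succ_r$, so $[k]$ together with the tree edges among its members forms a connected subtree containing $r$. This connectedness is what makes the marginal costs behave monotonically: when $S\subseteq S_r\setminus[k]$ is appended, it can be attached to the already-spanned subtree on $[k]$ through its cheapest available links.

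The remaining and most delicate step is the PC inequality itself, $c([k]\cup S)-c([k])\leq c([j]\cup S)-c([j])$ for $k>j$ (equivalently $[j]\subseteq[k]$) and $S\subseteq S_r\setminus[k]$. The main obstacle is that the MST is a global object: enlarging a coalition can restructure the entire optimal tree rather than merely append edges, so one cannot argue by naive edge addition. I would handle this through the matroid exchange (cut and cycle) properties of spanning trees underlying Kruskal's greedy construction, which imply that the marginal cost of connecting $S$ can only decrease as the base set of already-connected nodes grows from $[j]$ to the superset $[k]$; intuitively, a larger connected component offers $S$ at least as many, and no more expensive, attachment points. Rather than re-deriving this in full, I would invoke the general theorem of \cite{Granot1982} to discharge the inequality once the MCST structure and the source-first extension $\pi$ of $\succ_r$ have been established, thereby concluding that $(S_r,c)$ is permutationally convex.
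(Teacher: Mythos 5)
Your proposal is correct, but it takes a genuinely different route from the paper's proof. You reduce the theorem to the general result of \cite{Granot1982} (every MCST game is permutationally convex) after verifying the structural hypotheses: Assumptions~\ref{p4ass:union} and~\ref{p4ass:coalcond} make $r$ the unique source, the direct edges $(r,b)$ make every coalition spannable, and the witnessing permutation is a linear extension of the tree order $\succ_r$ with $r$ first. The paper, by contrast, proves the PC inequality from scratch: it fixes the MST order, writes $c(T\cup[b])=\omega(\mc{E}_b^T)+c([b])$ for $T\subset S_r\setminus[b]$, and compares marginal costs across initial segments via a two-case path decomposition depending on how $T$ attaches relative to $b$ in the tree order (equality in one case, strict inequality in the other). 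Your permutation coincides with the paper's ordering, so the two arguments share the same witness; what the citation route buys is brevity, and what the paper's route buys is self-containedness plus explicit handling of the setting's idiosyncrasies --- the retailer network is not a complete graph and the source $r$ is itself a player of $(S_r,c)$, both of which deviate from the conventions of \cite{Granot1982}. You gloss over these; they are harmless (missing consumer--consumer edges can be assigned prohibitive weights without altering any coalition's MST, and prepending $r$ with zero marginal cost is innocuous), but a careful reduction should say so. One warning: your ``matroid exchange'' intuition --- that a larger connected base can only cheapen the attachment of $S$ --- proves too much. If it held for arbitrary nested coalitions it would make $c$ submodular, which is false for MST games in general and is precisely why \cite{Granot1982} introduced permutational convexity as the weaker, correct property; the restriction to initial segments of the tree order is essential, and exploiting it is exactly the content of the paper's direct argument. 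Since you ultimately rest the inequality on the citation rather than on that intuition, your proof stands, but as a self-contained argument it would be incomplete.
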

\begin{proof}
  For a consumer $b\in\mc{B}\cap S_r$, and $[b]\subset S_r$ an MST ordered set with $b\succ_r b_j$ where $b_j\in[b]$ satisfies $c([b]) > c([b_j])$ and $[b_j]\subset [b]$. Now, given a finite  $T\subset S_r\setminus [b]$, there exist a path $\phi(v_0,v_f)$ in $(S_r,\mc{E}_r)$ such that its end points $v_0,v_f\in S_r$ satisfy $v_0\in T$ and $v_f\in [b]$. The minimum spanning tree for $T\cup[b]$ contains all the edges from the path $\phi(r,b)$ and all those edges $\mc{E}_{b}^T = \bigcup_{t\in T}\bigcup_{b\in[b]}\phi(b,t)$, \ie $c(T\cup[b]) = \omega(\mc{E}_b^T) + c([b])$.  There are two possible cases: the set $T$ contains elements $d\succ_r b$ or $d\not\succ_r b$. For the former, the increment $c(T\cup [b]) - c([b]) = \omega(\mc{E}_b^T)$ which implies\[\begin{split}c(T\cup [b]) - c([b]) & = \omega(\mc{E}_b^T)+c([b_k]) - c([b_k])\\ c(T\cup [b]) - c([b]) & = c(T\cup[b_k]) - c([b_k])\end{split}\]for all $b_k\succ_r b$. For the second case, the set $T\subset S_r\setminus [b]$ contains at least a player that dominates $b\in\mc{B}\cap S_r$. The set of edges $\mc{E}_b^T$ contains two components: the path $\phi(d,b)$ and the remaining edges $\mc{E}_b^T\setminus\phi(d,b)$. Clearly, the increment \[c(T\cup [b]) - c([b]) = \omega(\phi(d,b))+ \omega(\mc{E}_b^T\setminus\phi(d,b)).\]On the other hand, for $b\succ_r b_k$,\[c(T\cup [b_k]) - c([b_k]) = \omega(\phi(d,b_k)) + \omega(\mc{E}_{b_k}^T\setminus\phi(d,b_k)).\]Since $b_k\succ_r d$, then $\omega(\phi(d,b_k)) >\omega(\phi(d,b))$ and $\omega(\mc{E}_{b_k}^T\setminus\phi(d,b_k)) = \omega(\mc{E}_b^T\setminus\phi(d,b))$. As a result,\[c(T\cup [b]) - c([b])<c(T\cup [b_k]) - c([b_k])\]for all $T\subset S_r\setminus[b]$. The MCST $(S_r,c)$ is a PC game.
\end{proof}
\begin{theorem}
Suppose Assumptions \ref{p4ass:union} and \ref{p4ass:coalcond} hold. For $r\in\mc{R}$, The MCST $( S_r,c)$ has a non-empty core.
\end{theorem}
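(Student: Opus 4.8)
The plan is to lean almost entirely on the preceding result. Theorem~\ref{thm:retailer_PC} has already established that $(S_r,c)$ is a PC game, so the only remaining task is to invoke the fact, due to \cite{Granot1982}, that every permutationally convex game possesses a non-empty core; the claim then follows at once. I would, however, make the argument self-contained by exhibiting an explicit core allocation, since in the MCST setting such an allocation is readily available from the MST structure itself.

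Concretely, let $\pi$ be the permutation of $S_r$ witnessing permutational convexity in Theorem~\ref{thm:retailer_PC}, namely the root-first order induced by $\succ_r$ along the MST rooted at $r$ (so that each node's predecessor on the path to $r$ precedes it). For each consumer $b\in\mc{B}\cap S_r$, assign the cost $x_b = \omega(e_b)$, where $e_b$ is the unique edge of the MST joining $b$ to its predecessor on the path $\phi(r,b)$. This is the marginal-cost (Bird) vector associated with $\pi$: since the parent of $b$ is already present when $b$ is added, one has $x_b = c([b]) - c([b]\setminus\{b\})$ by construction of the MST.

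First I would check efficiency: because every edge of $\mc{E}^*_{S_r}$ is the connecting edge of exactly one consumer, the map $b\mapsto e_b$ is a bijection onto the MST edges, whence $\sum_{b} x_b = \omega(\mc{E}^*_{S_r}) = c(S_r)$. Next, for stability I would show $\sum_{i\in T} x_i \le c(T)$ for every $T\subseteq S_r$. This is exactly where the PC inequalities proven in Theorem~\ref{thm:retailer_PC} do the work: summing the decreasing marginal bounds $\nu([k]\cup S)-\nu([k]) \le \nu([j]\cup S)-\nu([j])$ (in their cost form for $c$) along the ordering $\pi$ yields, for any subcoalition, that the total marginal charge assigned to its members never exceeds the cost of serving them in isolation. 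Granot and Huberman's telescoping estimate formalises precisely this step, so I would cite it rather than recompute it.

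The main obstacle is bookkeeping of orientation. The core in the Notation section is stated for the savings game $(\mc{N},\nu)$ as a lower-bound (maximisation) object, whereas $(S_r,c)$ is a cost game; I would therefore first restate the core of $(S_r,c)$ in its cost form, namely efficiency $\sum_{i\in S_r} x_i = c(S_r)$ together with the coalitional upper bounds $\sum_{i\in T}x_i\le c(T)$, and confirm that the convexity and PC conventions fixed earlier (submodular, decreasing marginals) are the cost-oriented ones, so that \cite{Granot1982} applies verbatim. Verifying the stability inequalities for subcoalitions $T$ that straddle several branches of the MST is the only genuinely technical point, and it is discharged by the same branch decomposition of $\mc{E}_b^T$ already used in the proof of Theorem~\ref{thm:retailer_PC}.
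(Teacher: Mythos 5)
Your proposal is correct and takes essentially the same route as the paper: the paper's entire proof is ``an adaptation of \cite[Theorem 1]{Granot1982} to our setting,'' i.e., it combines the permutational convexity established in Theorem~\ref{thm:retailer_PC} with Granot--Huberman's result that every PC game has a non-empty core, exactly as you propose. Your explicit Bird/marginal-cost allocation along the MST root-first order is just an unpacking of the content of that cited theorem (whose proof exhibits precisely this marginal vector as a core element), so it adds self-contained detail rather than a genuinely different argument.
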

\begin{proof}
The proof is an adaptation of \cite[Theorem 1]{Granot1982} to our setting.
\end{proof}
%
%
With the non-emptiness of the core for cost networks secured, we turn our attention to the game involving multiple retailers. Our objective is to show that competition among retailers results in greater profits. To this aim, the next result establishes subadditivity of the game.
%
\begin{theorem}
  Suppose Assumption~\ref{p4ass:coalcond} holds. The coalitional game with multiple energy retailers $(\mc{N},\nu)$ with value function $\nu(\cdot)$ defined in~\eqref{p4eq:coalvalue} is subadditive, \ie for $r,s\in\mc{R}$
  \begin{equation}\label{p4eq:subadd}
    v(S_r \cup S_s)\leq v(S_s)+v(S_r).
  \end{equation}
\label{p4thm:comps}
\end{theorem}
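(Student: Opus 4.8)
The plan is to exploit the convention that $\nu$ vanishes on nonviable coalitions together with a single spanning-tree comparison. First I would observe that because $r,s\in\mc{R}$ are distinct retailers, the union $S_r\cup S_s$ contains two retailers and therefore violates Assumption~\ref{p4ass:coalcond}: it is a nonviable coalition. By the stipulation that $\nu(X)=0$ for every nonviable $X\subset\mc{N}$, the left-hand side of \eqref{p4eq:subadd} collapses to $\nu(S_r\cup S_s)=0$. Consequently the claim reduces to the single inequality $\nu(S_r)+\nu(S_s)\geq 0$, and it suffices to prove $\nu(S_r)\geq 0$ for an arbitrary viable coalition.

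To establish non-negativity of $\nu(S_r)$, I would compare the MST cost against a distinguished spanning tree. Recall from the construction of $\mc{G}_r=(S_r,\mc{E}_r)$ that $\mc{E}_r$ contains every direct link $(r,b)$ with $b\in\mc{B}_r$. Hence the star centred at $r$, whose edge set is $\{(r,b)\colon b\in\mc{B}_r\}$, is itself a spanning tree of $\mc{G}_r$ with total weight $\sum_{b\in\mc{B}_r}\omega(r,b)$. Since $c(S_r)=\omega(\mc{E}^*_{S_r})$ is by definition the weight of a \emph{minimum} spanning tree, it cannot exceed the weight of the star, so $c(S_r)\leq\sum_{b\in\mc{B}_r}\omega(r,b)$. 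Substituting into \eqref{p4eq:coalvalue} then yields $\nu(S_r)=\sum_{b\in\mc{B}_r}\omega(r,b)-c(S_r)\geq 0$, with the degenerate case $\mc{B}_r=\emptyset$ handled trivially since both terms vanish.

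Combining the two observations gives $\nu(S_r)+\nu(S_s)\geq 0=\nu(S_r\cup S_s)$, which is precisely \eqref{p4eq:subadd}. I expect the only genuinely delicate point to be justifying that the star is admissible as a spanning tree, that is, that all edges $(r,b)$ indeed belong to $\mc{E}_r$ so that the MST bound applies; this is guaranteed by Definition~\ref{p4def:coal} and the description of the cost network, but it is the hinge on which the whole argument turns. Everything else follows directly from the conventions $\nu(X)=0$ on nonviable sets and $c(S_r)=\omega(\mc{E}^*_{S_r})$.
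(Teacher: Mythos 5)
Your proposal is correct and follows essentially the same route as the paper's proof: $\nu(S_r\cup S_s)=0$ because the union contains two retailers and is therefore nonviable under Assumption~\ref{p4ass:coalcond}, and $\nu(S_r)\geq 0$ because the sum of direct connection costs dominates the MST cost. Your explicit justification of the latter step---that the star $\{(r,b)\colon b\in\mc{B}_r\}$ is itself a spanning tree of $\mc{G}_r$, so $c(S_r)\leq\sum_{b\in\mc{B}_r}\omega(r,b)$---is exactly the detail the paper asserts in one clause (``at worst equal to the MST cost'') without spelling out.
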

\vspace{-8mm}
\begin{proof}
  For $r,s\in\mc{R}$ and from Assumption~\ref{p4ass:coalcond}, $\nu(S_r\cup S_s)=0$. On the other hand, $\nu(S_r) \geq 0$ since~\eqref{p4eq:coalvalue} depends on the sum of direct connection costs $\omega(r,b)$ with $b\in\mc{B}\cap S_r$ which is at worst equal to the MST cost $c(S_r)$. Therefore \eqref{p4eq:subadd} holds for any $r,s\in\mc R$.
\end{proof}
As mentioned in \cite{Chakraborty2019}, subadditivity is not enough to show the stability of the game or the satisfaction of the members of a given coalition, \ie there is no incentive to disband coalitions. To do this, we utilize the following result on concavity.
\begin{theorem}\label{p4thm:concavity}
  Suppose Assumption~\ref{p4ass:coalcond} holds. The game with multiple energy retailers $(\mathcal{N},\nu)$ satisfies for any $r,s\in\mc{R}$,
  \begin{equation} \label{p4eq:concav}
    \nu(S_r \cup S_s)+\nu(S_r \cap S_s)\leq v(S_r)+v(S_s).
  \end{equation}
\end{theorem}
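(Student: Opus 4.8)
The plan is to reduce \eqref{p4eq:concav} to the non-negativity of $\nu$ on viable coalitions, which is already the engine behind the subadditivity result of Theorem~\ref{p4thm:comps}. I would split into two cases according to whether $r=s$.

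In the degenerate case $r=s$ one has $S_r\cup S_s = S_r\cap S_s = S_r$, so both sides of \eqref{p4eq:concav} equal $2\nu(S_r)$ and the inequality holds with equality. For the genuine case $r\neq s$, I would first evaluate the left-hand side term by term. By the disjointness condition in Assumption~\ref{p4ass:coalcond}, distinct retailer coalitions satisfy $S_r\cap S_s=\emptyset$, hence $\nu(S_r\cap S_s)=\nu(\emptyset)=0$. The union $S_r\cup S_s$ contains the two distinct retailers $r,s\in\mc{R}$, so it violates the single-retailer condition of Assumption~\ref{p4ass:coalcond} and is therefore a nonviable subset, giving $\nu(S_r\cup S_s)=0$ by the convention $\nu(X)=0$ fixed for nonviable $X$ just after \eqref{p4eq:coalvalue}. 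Consequently the entire left-hand side vanishes and \eqref{p4eq:concav} collapses to the claim $0\leq \nu(S_r)+\nu(S_s)$.

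It then remains to argue that each term on the right is non-negative, repeating the observation used in Theorem~\ref{p4thm:comps}: from \eqref{p4eq:coalvalue}, $\nu(S_r)=\sum_{b\in\mc{B}_r}\omega(r,b)-c(S_r)$, and since the edges $\{(r,b)\colon b\in\mc{B}_r\}$ of direct connections form a star centred at $r$ that spans $S_r$, their total weight $\sum_{b\in\mc{B}_r}\omega(r,b)$ is an upper bound for the minimum spanning tree cost $c(S_r)$. Hence $\nu(S_r)\geq0$, and likewise $\nu(S_s)\geq0$, so their sum is non-negative and the proof concludes.

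I do not anticipate a real obstacle: once one notices that $\nu(S_r\cap S_s)=0$ for $r\neq s$, concavity \eqref{p4eq:concav} is logically equivalent to the subadditivity inequality \eqref{p4eq:subadd} already established. The only step deserving explicit justification is the domination of $c(S_r)$ by the star weight, \ie that the direct-connection edges indeed constitute a spanning tree of the cost network $\mc{G}_r$, which is precisely what guarantees $\nu(S_r)\geq0$.
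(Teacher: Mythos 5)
Your proposal is correct and follows essentially the same route as the paper's own proof: both arguments reduce \eqref{p4eq:concav} to the vanishing of the left-hand side via Assumption~\ref{p4ass:coalcond} (disjointness gives $\nu(S_r\cap S_s)=\nu(\emptyset)=0$, two retailers in the union gives $\nu(S_r\cup S_s)=0$) together with non-negativity of $\nu$ on viable coalitions, which the paper cites from Theorem~\ref{p4thm:comps} and you re-derive via the star-dominates-MST observation. Your explicit treatment of the degenerate case $r=s$ is in fact slightly more careful than the paper, whose proof implicitly assumes $r\neq s$.
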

\begin{proof}
  As consequence of (\ref{p4eq:nosellers}) and (\ref{p4eq:nooverlap}) in Assumption~\ref{p4ass:coalcond}, $v(S_i \cup S_j)=0$ and $v(S_i \cap S_j)=v(\emptyset)=0$. From Theorem~\ref{p4thm:comps}, $v(S_r)\geq0$ holds for all $r\in\mc{R}$. Therefore \eqref{p4eq:concav} holds.
\end{proof}
A property of all $N$-player concave games is that they are balanced \cite{Rosen1965a}, meaning that for any concave game, there exists an equilibrium point. We can state the following result.
\begin{theorem}
The coalitional game with multiple energy retailers $(\mathcal{N},v )$ is balanced.
\end{theorem}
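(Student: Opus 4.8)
The plan is to derive balancedness as an immediate corollary of the submodularity already established, rather than working directly with the characteristic function. Theorem~\ref{p4thm:concavity} gives the inequality $\nu(S_r\cup S_s)+\nu(S_r\cap S_s)\le\nu(S_r)+\nu(S_s)$, which is exactly the convexity (submodularity) condition in the sense defined in the preliminaries. My first step would therefore be to recall the Bondareva--Shapley equivalence: a cooperative game is balanced if and only if its core is non-empty. This reduces the claim to exhibiting a non-empty core.

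The second step is to invoke the classical fact that a convex (submodular) cooperative game always has a non-empty core, with the marginal-contribution vectors furnishing explicit core allocations. Since Theorem~\ref{p4thm:concavity} verifies the defining inequality for every pair of viable coalitions, and Theorem~\ref{p4thm:comps} together with the degeneracy $\nu(X)=0$ on non-viable subsets fixes the value on all remaining subsets, non-emptiness of the core — and hence balancedness — would follow at once. Alternatively, one may simply cite the concave-game result of \cite{Rosen1965a}, as the remark preceding the statement does, which closes the argument in a single line via the existence of an equilibrium point.

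The step I expect to be the main obstacle is reconciling the sign convention and terminology with the degenerate structure of $\nu$. Because the grand coalition $\mc{N}$ contains more than one retailer it is non-viable, so $\nu(\mc{N})=0$, whereas the viable coalitions satisfy $\nu(S_r)\ge0$ by Theorem~\ref{p4thm:comps}. I would need to check carefully in which orientation the balancedness inequality is being read, since a naive Bondareva--Shapley test against $\nu(\mc{N})=0$, using the partition $\{S_r\}_{r\in\mc{R}}$ as a balanced family, would appear to fail whenever some $\nu(S_r)>0$. Resolving this cleanly requires either confining the argument to each retailer's subgame $(S_r,c)$ — where the MCST core guaranteed earlier already yields balancedness — or adopting the concave-game notion of \cite{Rosen1965a}, under which what is actually asserted is existence of an equilibrium rather than a Bondareva--Shapley core condition. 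This is the point I would flag as demanding the most care.
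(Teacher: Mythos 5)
Your fallback one-liner is, in fact, the paper's entire proof: the theorem is disposed of with ``The proof is adapted from \cite[Theorem 1]{Rosen1965a}'', preceded by the remark that concave games are ``balanced'' in the sense of admitting an equilibrium point. So the route you close with is exactly the paper's route.

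Your primary route, however, contains a step that fails, and your own third paragraph is the demonstration of why. The classical theorem that marginal-contribution vectors are core allocations applies to \emph{supermodular} games, \ie those satisfying $\nu(S\cup T)+\nu(S\cap T)\geq\nu(S)+\nu(T)$. Theorem~\ref{p4thm:concavity} establishes the \emph{opposite} (submodular) inequality --- the paper's preliminaries merely happen to attach the label ``convex'' to it. For a profit game, submodularity carries no core guarantee at all, and here the core in the standard efficient sense is genuinely empty whenever some $\nu(S_r)>0$: efficiency forces $\sum_{i\in\mc{N}}a_i=\nu(\mc{N})=0$ because the grand coalition is nonviable, while the partition $\{S_r\}_{r\in\mc{R}}$ requires $\sum_{i\in S_r}a_i\geq\nu(S_r)\geq 0$ for every $r$; summing these constraints is incompatible with efficiency unless every $\nu(S_r)=0$. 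This is precisely the Bondareva--Shapley failure you flagged, so the reduction ``balanced iff non-empty core'' does not close the argument --- it refutes the claim under that reading. The only consistent interpretation, and the one the paper adopts, is Rosen's strategic-form notion for concave games (existence of an equilibrium point), not the cooperative-game notion of balancedness; alternatively one retreats to the per-retailer MCST subgames $(S_r,c)$, whose cores are non-empty by the earlier PC-game results. Your diagnosis here is actually sharper than the paper's own treatment: the paper concedes only \emph{after} this theorem that $\nu(\mc{N})=0$ renders conventional cooperative-game methods unusable, which is why it then introduces $\mathbb{D}_{hp}$-stability.
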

 \begin{proof}
 The proof is adapted from \cite[Theorem 1]{Rosen1965a}.
 \end{proof}
\begin{remark}
In both Theorem~\ref{thm:retailer_PC} and \ref{p4thm:concavity} similar inequalities are used to denote either convexity and concavity respectively. As mentioned in \cite{Granot1982}, the difference lies in the nature of the characteristic function used in the game: for the former $c(\cdot)$ is a cost function and measures how expensive is for players to form a coalition, whereas for the latter $\nu(\cdot)$ represents savings made by players joining a coalition.
\end{remark}
%
A consequence of Theorem~\ref{p4thm:comps} and Assumption~\ref{p4ass:coalcond} is that $\nu(\mc{N}) = 0$ since, by definition, the grand coalition contains all retailers. This fact renders conventional cooperative game methods, see \cite{Bausoa,Chakraborty2019}, unusable. To overcome this difficulty, we recur to notion of $\mbb{D}_{hp}$-stability introduced by \cite{Apt2006}.
%
\begin{definition}[$\mathbb{D}_{hp}$ stability]
  For $r\in\mc{R}$, the coalition $S_r\subset\mc N$ is $\mathbb{D}_{hp}$-stable if the following conditions are satisfied:
  \begin{enumerate}
  \item given a collection $\{P_{r_1},\ldots,P_{r_L}\}$ resulting from an arbitrary partition of $S_r$, such that $\cup_{j=1}^L P_{r_j}=S_r$:
    \begin{equation} \label{p4eq:cond1}
      \nu(S_r)\geq\sum_{j=1}^L \nu(P_{r_j}),~\forall i\in\mathcal{R},
    \end{equation}
  \item given a set $\mathcal{T}\subset\mc{R}$ with $|\mc T|\leq |\mc R|$:
    \begin{equation}\label{p4eq:cond2}
      \sum_{r\in \mathcal{T}}\nu(S_r)\geq \nu(\bigcup\limits_{r\in \mathcal{T}} S_r).
    \end{equation}
  \end{enumerate}
  \label{def:Dhp-stability}
\end{definition}
Our next result establishes the $\mbb{D}_{hp}$ stability of the coalitions formed in the game $( \mc{N},\nu )$.
\begin {theorem}
  Suppose Assumption \ref{p4ass:union} and \ref{p4ass:coalcond} hold. The retailer coalitions $S_r\subset\mc N$ are $\mathbb{D}_{hp}$ stable for the game with multiple retailers $( \mc{N},\nu )$.
\end{theorem}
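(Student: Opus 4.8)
The plan is to verify the two conditions in Definition~\ref{def:Dhp-stability} directly, leveraging the structural results already established for the game $(\mc{N},\nu)$. The crucial observation is that both conditions reduce to statements about the nonnegativity of $\nu(\cdot)$ on viable coalitions and its vanishing on nonviable ones, facts secured in Theorem~\ref{p4thm:comps} and its proof.

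For condition~\eqref{p4eq:cond2}, I would argue as follows. Fix a set $\mathcal{T}\subset\mc{R}$ with $|\mc{T}|\leq|\mc{R}|$ and consider the union $\bigcup_{r\in\mc{T}}S_r$. If $|\mc{T}|\geq 2$, this union contains two or more retailers, so by Assumption~\ref{p4ass:coalcond} it is a nonviable coalition and hence $\nu(\bigcup_{r\in\mc{T}}S_r)=0$. Since Theorem~\ref{p4thm:comps} guarantees $\nu(S_r)\geq 0$ for every $r\in\mc{R}$, the left-hand side $\sum_{r\in\mc{T}}\nu(S_r)$ is a sum of nonnegative terms and therefore dominates $0$. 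If $|\mc{T}|=1$, the inequality is an equality. Thus \eqref{p4eq:cond2} holds in all cases.

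For condition~\eqref{p4eq:cond1}, I would exploit the disjointness of the partition $\{P_{r_j}\}_{j=1}^L$ of $S_r$. Each block $P_{r_j}$ is a subset of the single-retailer coalition $S_r=\{r\}\cup\mc{B}_r$, so at most one block contains the retailer $r$; every other block consists solely of consumers and is therefore nonviable, contributing $\nu(P_{r_j})=0$. Hence the right-hand side of \eqref{p4eq:cond1} collapses to the single term $\nu(P_{r_{j_0}})$, where $P_{r_{j_0}}$ is the block containing $r$. Since $P_{r_{j_0}}=\{r\}\cup(\mc{B}_r\cap P_{r_{j_0}})\subseteq S_r$ is itself a viable subcoalition with fewer consumers, I would invoke Assumption~\ref{assum:savings} (coalition savings), which asserts that savings grow with coalition size, to conclude $\nu(P_{r_{j_0}})\leq\nu(S_r)$. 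This yields $\sum_{j=1}^L\nu(P_{r_j})=\nu(P_{r_{j_0}})\leq\nu(S_r)$, as required.

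The main obstacle is the monotonicity step in condition~\eqref{p4eq:cond1}: Assumption~\ref{assum:savings} as stated only compares $\nu(\{r,b\})$ with $\nu(S_r)$ for single-consumer subcoalitions, whereas the partition block $P_{r_{j_0}}$ may contain an arbitrary consumer subset. I would need to upgrade this to monotonicity over all viable subcoalitions, which follows from the permutational convexity established in Theorem~\ref{thm:retailer_PC}: the PC inequality on the cost function $c(\cdot)$ translates, via the definition~\eqref{p4eq:coalvalue} of $\nu(\cdot)$ as direct-connection savings, into the statement that the marginal contribution of adding consumers is nonnegative, so enlarging a viable subcoalition toward $S_r$ never decreases $\nu$. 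Making this translation between the cost-game monotonicity and the savings-game monotonicity rigorous is where the care is required; once it is in place, both conditions of $\mathbb{D}_{hp}$-stability follow immediately.
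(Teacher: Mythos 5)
Your overall architecture matches the paper's: for condition~\eqref{p4eq:cond2} your argument is exactly the paper's (nonviability of $\bigcup_{r\in\mc{T}}S_r$ forces $\nu(\bigcup_{r\in\mc{T}}S_r)=0$, and $\nu(S_r)\geq 0$ from Theorem~\ref{p4thm:comps} finishes it), and for condition~\eqref{p4eq:cond1} you correctly collapse the sum to the single block $P_{r_{j_0}}$ containing $r$, since all-consumer blocks are nonviable and contribute zero. The gap is in the last step, $\nu(P_{r_{j_0}})\leq\nu(S_r)$. You rightly observe that Assumption~\ref{assum:savings} is too weak, but the proposed repair via Theorem~\ref{thm:retailer_PC} does not go through. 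Permutational convexity of the cost game is a statement comparing the marginal costs $c([k]\cup S)-c([k])\leq c([j]\cup S)-c([j])$ of one and the same added set $S$ over nested \emph{initial segments} $[j]\subset[k]$ of one specific permutation (the MST-induced order $\succ_r$); it is a submodularity-type comparison, not a monotonicity statement. Two concrete failures: first, the partition block $P_{r_{j_0}}$ is an arbitrary viable subset of $S_r$ and in general is \emph{not} an initial segment of that permutation, so the PC inequality cannot even be invoked for it; second, even where it applies, PC bounds one increment of $c$ by another increment of $c$ --- it never produces the bound you actually need, which compares an increment of $c$ against the direct-connection weights $\omega(r,b)$ appearing in~\eqref{p4eq:coalvalue}.

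What closes the argument (and is, in compressed form, what the paper does) is elementary and bypasses PC entirely. Since the retailer cost network $(S_r,\mc{E}_r)$ contains the direct edge $(r,b)$ for every $b\in\mc{B}_r$, any spanning tree of $P_{r_{j_0}}$ extended by the edges $\{(r,b)\colon b\in S_r\setminus P_{r_{j_0}}\}$ spans $S_r$, whence
\begin{equation*}
c(S_r)\;\leq\; c(P_{r_{j_0}})+\sum_{b\in S_r\setminus P_{r_{j_0}}}\omega(r,b).
\end{equation*}
Substituting into~\eqref{p4eq:coalvalue} gives
\begin{equation*}
\nu(S_r)-\nu(P_{r_{j_0}})=\sum_{b\in S_r\setminus P_{r_{j_0}}}\omega(r,b)-\bigl(c(S_r)-c(P_{r_{j_0}})\bigr)\;\geq\;0,
\end{equation*}
which is precisely the monotonicity over arbitrary viable subcoalitions that you need (note this also shows the inequality has nothing to do with convexity of the cost game). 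With this substitution your proof is complete; as written, the PC step is a genuine gap.
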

\begin {proof}
  For a fixed $r\in\mathcal{R}$ and associated coalition $S_{r}\subset\mathcal{N}$, consider a collection $\{P_{r_1},\ldots,P_{r_L}\}$. It follows that $r\in P_{r_j}$ for some $j\in\{1,\ldots,L\}$ which implies $\nu(P_{r_j})\geq 0$, and $\nu(P_{i_k})=0$ for the rest. The cost associated with $P_{r_j}$ satisfies $c(S_r)\geq c(P_{r_j})$ since the MST of $P_{r_j}$ is contained in the one corresponding to $S_r$. Condition~\eqref{p4eq:cond1} follows directly from\[\nu(S_r) - \nu(P_{r_j})\geq c(S_r) - c(P_{r_j})\] since $c(\{r,b_{k}\}) > 0$ for all $b_k \in S_r\setminus P_{r_j}$.

  For the second condition,  \eqref{p4eq:nosellers} and \eqref{p4eq:nooverlap} imply that $\nu(\cup_{r\in \mathcal{T}} S_r)=0$. From the value formulation for a coalition~(\ref{p4eq:coalvalue}) and Assumption~\ref{p4ass:coalcond}, $\nu(S_i)\geq0$. From the above, condition~(\ref{p4eq:cond2}) holds.
\end{proof}
The coalitional game as formulated, following Algorithm~\ref{p4alg:alg1}, leads to a partition of the network in coalitions. The prices and demands from each retailer and consumer in such partition operate at a Stakelberg equilibrium. We refer the reader to our previous result in \cite[Theorem 2]{GenisMendoza2021}, where a similar market setup is presented together with the ways in which such an equilibrium is derived and characterised. We illustrate the existence and convergence towards a Stackelberg equilibrium for our game with the numerical results in Section \ref{p4sec:alltogether}. 

\section{Risk Sharing and Reduction of Statistical Dispersion}
\label{p4sec:stats}
In this section we provide a brief insight into the statistical implications for consumers when considering their demand to be a random variable. Motivated by \cite{Baeyens2013}, we consider the statistical properties of consumer demand in order to show that coalition formation entails lower risks. In the proposed game, the situation where a retailer coalition $S_r$ contains less consumers than expected constitutes a risk for consumers since subsidies are lower. Consumers can lower this risk by acting together in joining a retailer coalition which allows them to increase collective profit and sharing the risk.

For each $b\in \mc B$, consider $\alpha_b$ to be a random variable $\alpha_{{b}}(t)\sim N(\mu_b,\sigma^2)$. A direct consequence is that the power demand $P_b^d(t)\in [\underline{P}_b,\bar{P}_b]$ is a stochastic process; the vector demand is $P^d = (P_{1},\ldots P_{|\mc B|})\in \mc{P} = \prod_{b\in\mc B}[\underline{P}_b,\bar{P}_b]$. The cumulative distribution function (CDF), compactly supported on $\mc{P}\subset\Rset^{|\mc B|}$, at each time $t$ is given by
\begin{equation}
  \Phi(Q^d;t)=\mathbb{P}\{P^d(t)\leq Q^d\},
  \label{p4eq:prob}
\end{equation}
and denotes the probability of the random process $P^d(t)$ taking a value less than or equal to $Q^d$. The power consumption of a coalition $S_r\subseteq\mathcal{N}$ for $r\in\mc{R}$ would be then be represented by a sum of stochastic processes $  P^d_{S_r}(t)=\sum_{b\in S_r\cap\mc B}{P^d_{b}(t)}$. The corresponding random process is denoted by $\mathbf{P}^d_{S_r}=\{{P}^d_{S_r}(t)<P^d\}$ with support $\mc{P}_{S_r} = [\underline{P}_{S_r},\bar{P}_{S_r}]$ with $\underline{P}_{S_r} = \sum_{b\in S_r\cap\mc B}\underline{P}_b$ and similarly for $\bar{P}_{S_r}$. From this, the time averaged CDF is
\begin{equation}
  F_{S_r}(P^d)=\frac{1}{T}\int_{t_0}^{t_f}\Phi_{S_r}(P^d,t)dt.
  \label{p4eq:integral}
\end{equation}
The associated quantile function $F^{-1}_{S_r}:[0,1]\to\mc{P}_{S_r}$ is $F^{-1}_{S_r}(p)=\inf \{x\in[0,1]| F_{S_r}(x)\geq p\}$ for any $p\in [0,1]$. On the other hand, the total consumer profit for a given price $\Lambda_r$ and subsidies $\kappa_r\delta_b^r$ is $\Pi_{S_r}(P^d,r)=\sum_{b \in S_r\cap\mc B}{\Pi_{b}(P^d_{b},r)}$. The expected consumer profit is also a stochastic process and is given by:
\begin{equation}
  J_{S_r}(r)=\mathbb{E}~\Pi_{S_r}(\mathbf{P}^d_{S_r},r).
  \label{p4eq:expected}
\end{equation}
%
The individual profit of a consumer $b\in\mc{B}$ such that $S_r=\{r,b\}$ is denoted as $\Pi_{\{r,b\}}$. The following result shows that risk sharing through coalitions leads to an increase in profit \emph{almost surely}.\footnote{A property $P$ holds \emph{almost surely} if the points where its complement does not hold has measure zero.}.
\begin {proposition}
  Suppose Assumptions \ref{p4ass:union} and \ref{p4ass:coalcond} hold. For $r\in\mc R$, the following holds \emph{almost surely}, 
\begin{equation}\label{p4eq:almoststonchs}
\Pi_{S_r}(\mathbf{P}^d_{S_r},r)\geq\sum_{b \in S_r\cap \mc B}{\Pi_{\{r,b\}}(\mathbf{P}^d_{b_j},r)}.
\end{equation}
\end{proposition}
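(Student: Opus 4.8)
The plan is to show that the gap between the two sides of \eqref{p4eq:almoststonchs} equals exactly the coalition value $\nu(S_r)$, which the proof of Theorem~\ref{p4thm:comps} guarantees is nonnegative. The starting point is to expand the left-hand side using $\Pi_{S_r}(\mathbf{P}^d_{S_r},r)=\sum_{b\in S_r\cap\mc B}\Pi_b(P^d_b,r)$ together with the definitions $\Pi_b(\zeta,r)=U_b(\zeta,r)-\lambda_r\zeta$ and the utility \eqref{p4eq:util}. This separates each consumer's profit into a \emph{demand-dependent} part $\alpha_b(P^d_b)^{1/6}-\lambda_r P^d_b$, a fixed base payment $-\omega(r,b)$, and the \emph{subsidy} $\kappa_r\delta_b^r$, which by construction equals the Shapley imputation $\Phi_b(\nu)$ of $b$ in the coalition $S_r$.

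The second step is to evaluate the singleton scenario. For a coalition $\{r,b\}$ the value is $\nu(\{r,b\})=\omega(r,b)-c(\{r,b\})=0$, since the MST of $\{r,b\}$ is the single edge $(r,b)$ of weight $\omega(r,b)$; by efficiency of the Shapley value this forces the subsidy of $b$ in $\{r,b\}$ to vanish, so $\Pi_{\{r,b\}}(\mathbf{P}^d_b,r)$ reduces to its demand-dependent part plus the base payment. Since both the subsidy and the base payment are constant in $\zeta$, the consumer optimisation \eqref{p4eq:optcons_mi} returns the same optimal demand $P^d_b$ in the coalition and in the singleton for the common price $\lambda_r$; hence the demand-dependent parts and base payments coincide term-by-term on both sides of \eqref{p4eq:almoststonchs}.

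The difference between the two sides then collapses to the sum of coalition subsidies, and the efficiency identity $\sum_{b\in S_r\cap\mc B}\Phi_b(\nu)=\nu(S_r)$ identifies this difference with $\nu(S_r)$. Invoking the bound $\nu(S_r)\geq 0$ from the proof of Theorem~\ref{p4thm:comps} (equivalently, $\nu(\{r,b\})\le\nu(S_r)$ from Assumption~\ref{assum:savings}) closes the inequality.

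The main obstacle I anticipate is making the \emph{almost sure} qualifier precise. The cancellation above is exact only if the optimal demands $P^d_b$ on the two sides agree for almost every realisation of the random coefficients $\alpha_b(t)\sim N(\mu_b,\sigma^2)$. Because the subsidy enters additively and independently of $\zeta$, the argmax of the strictly concave map $\zeta\mapsto\alpha_b\zeta^{1/6}-\lambda_r\zeta$ is unique and identical in both scenarios, except possibly on the measure-zero event where the maximiser lands on the boundary of $[\underline\zeta,\bar\zeta]$ or where $\alpha_b\le 0$ degenerates the problem; controlling this exceptional set is precisely what upgrades the deterministic identity into the stated almost-sure inequality.
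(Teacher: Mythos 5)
Your proof is correct, and it rests on the same pivot as the paper's: the inequality ultimately reduces to the fact that the coalition value dominates the singleton values, \ie $\nu(\{r,b\})=0\leq\nu(S_r)$, which is Assumption~\ref{assum:savings} (equivalently, the nonnegativity of $\nu(S_r)$ established in the proof of Theorem~\ref{p4thm:comps}). Where you differ is in how you reach that pivot. The paper's proof is a two-sentence sketch: it appeals to properties of sums of random variables and to homogeneity and superadditivity of functions of stochastic processes, then declares the condition a direct consequence of Assumption~\ref{assum:savings}, leaving implicit \emph{how} the coalition value actually enters the consumers' profits. You supply exactly that mechanism: the subsidy term in \eqref{p4eq:util} is by construction the Shapley imputation, the efficiency identity $\sum_{b\in S_r\cap\mc{B}}\Phi_b(\nu)=\nu(S_r)$ (using $\nu(\{r\})=0$) converts the sum of subsidies into the coalition value, and the observation $c(\{r,b\})=\omega(r,b)$ kills the subsidy on the singleton side, so after term-by-term cancellation the gap between the two sides of \eqref{p4eq:almoststonchs} is \emph{exactly} $\nu(S_r)\geq 0$. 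This buys an accounting identity that the paper never makes visible, and it localises the probabilistic content: since $\nu(S_r)$ depends only on the (deterministic) network weights, once the same demand realisations appear on both sides the gap is deterministic, so under that reading the inequality holds surely, and your measure-zero caveats about boundary maximisers or degenerate $\alpha_b$ are needed only under the re-optimisation reading. Two small points to state explicitly: first, the price $\lambda_r$ must be taken common to both sides, which is consistent with the paper's definition of $\Pi_{S_r}$ ``for a given price'' and is what legitimises the cancellation; second, your fallback reference to Assumption~\ref{assum:savings} makes the argument robust even if one distrusts the identification of the singleton subsidy with the Shapley value of the two-player game.
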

\begin{proof}
  To prove the result, we employ the properties of the sum of random variables \cite{Lemons2002}, homogeneity and superadditivity of functions of stochastic processes \cite{Baeyens2013}. Condition \eqref{p4eq:almoststonchs} is fulfilled as a direct consequence of Assumption~\ref{assum:savings}, where the value of a coalition is always greater or equal than the value of a single consumer with a retailer.  
\end{proof}
The above result establishes that coalitions always bring larger collective profits for consumers. These benefits can be directly attributed to the attenuation of statistical dispersion from aggregation; this phenomena has been also explored in \cite{Rajagopal} where the optimal expected profit directly depends on the deviation of the coalitional value-at-risk (CVaR) or \emph{coalitional shortfall deviation} \cite{Rockafellar2005}. For any $q\in(0,1)$, the CVaR deviation of $\mathbf{P}^d_{S_r}\sim F_{S_r}$ is defined as $\mathcal{D}_q(\mathbf{P}^d_{S_r}):=\mathbb{E}[\mathbf{P}^d_{S_r}]-\mathbb{E}[\mathbf{P}^d_{S_r}|\mathbf{P}^d_{S_r}\leq F_{S_r}^{-1}(q)]$. This deviation measures the difference between the expected value and the probability of $q$ being near the bounds of the probability distribution. Then, the reduction of dispersion that is induced by consumers joining a coalition $S_r$ and aggregating their demand is $\mb{\Delta}_{S_r}:=\sum_{b\in S_r}\mc{D}_q(\mathbf{P}^d_{b})-\mc{D}_q(\mathbf{P}^d_{S_r})$. The non-negativity of $\mathbf{\Delta}_{S_i}$ for all $S_r\subset \mc N$ follows immediately. This establishes that CVaR is a measure of the reduction in statistical dispersion. Moreover, there is an inversely proportional relation between expected profits and statistical dispersion of aggregate consumption.
\section{Implementation with Physical Dynamics}
\label{p4sec:alltogether}
In this section, we analyse the effect of the proposed pricing scheme on the voltages of a low voltage and resistive micro-grid, see Fig. \ref{p4fig:resdorf}. An element of the adjacency matrix $A_{ij}$ corresponds to a conductance $1/R_{ij}$ between nodes~$i$ and~$j$ and the network conductance matrix  $G = \mc{L} + R^{-1}$ where $R = \ts{diag}(\{R_{ii}\}_{i\in\mc{N}})$ is the impedance of each node \cite{Dorfler}.
\subsection{Micro-Grid and Demand Dynamics}
\label{p4sec:dynamics}
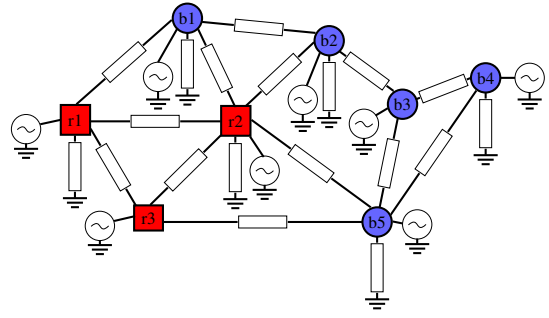
\begin{figure}
\centering
\scalebox{.65}{
\begin{tikzpicture}[circuit ee IEC]
\draw[very thick] (-1.62,-0.425) node [ground, rotate=-90] {};
\draw[very thick](-1.62,-0.325)to[resistor](-1.62,0.95);
\draw[very thick,fill=blue!60] (-1.62,1.25) circle (.3) node[anchor=center] {b5};
\draw[very thick](-0.8,0.805)to(-0.8,0.91) ;
\draw[very thick] (-0.8,0.705) node [ground, rotate=-90] {};
\draw (-0.8,1.2) circle (.3cm); 
\draw [very thick]  (-1.3,1.26) -- (-1.1,1.22);
\draw (-0.98,1.19) .. controls (-0.75,1.39) and (-0.84,0.99) .. (-0.62,1.2);
\draw[very thick](1.5,3.765)to(1.5,3.91) ;
\draw[very thick] (1.5,3.665) node [ground, rotate=-90] {};
\draw (1.5,4.2) circle (.3cm); 
\draw [very thick]  (0.9,4.2) -- (1.2,4.2);
\draw (1.32,4.19) .. controls (1.55,4.39) and (1.46,3.99) .. (1.68,4.2);
\draw[very thick] (0.6,2.525) node [ground, rotate=-90] {};
\draw[very thick](0.6,2.625)to[resistor](0.6,3.9);
\draw[very thick,fill=blue!60] (0.6,4.2) circle (.3) node[anchor=center] {b4};
\draw[very thick] (-2.6,3.285) node [ground, rotate=-90] {};
\draw[very thick](-2.6,3.385)to[resistor](-2.6,4.66);
\draw[very thick,fill=blue!60] (-2.6,4.96) circle (.3) node[anchor=center] {b2};
\draw[very thick](-3.14,3.305)to(-3.14,3.46) ;
\draw[very thick] (-3.14,3.205) node [ground, rotate=-90] {};
\draw (-3.14,3.75) circle (.3cm); 
\draw [very thick]  (-2.79,4.71) -- (-3.06,4.04);
\draw (-3.32,3.74) .. controls (-3.09,3.94) and (-3.18,3.54) .. (-2.96,3.75);
\draw[very thick] (-5.5,3.745) node [ground, rotate=-90] {};
\draw[very thick](-5.5,3.845)to[resistor](-5.5,5.12);
\draw[very thick,fill=blue!60] (-5.5,5.42) circle (.3) node[anchor=center] {b1};
\draw[very thick](-6.1,3.765)to(-6.1,3.93) ;
\draw[very thick] (-6.1,3.665) node [ground, rotate=-90] {};
\draw (-6.1,4.22) circle (.3cm); 
\draw [very thick]  (-5.7,5.22) -- (-6.1,4.52);
\draw (-6.28,4.21) .. controls (-6.05,4.41) and (-6.14,4.01) .. (-5.92,4.22);
\draw[very thick,fill=blue!60] (-1.1,3.66) circle (.3) node[anchor=center] {b3};
\draw[very thick](-1.89,2.835)to(-1.89,2.97) ;
\draw[very thick] (-1.89,2.735) node [ground, rotate=-90] {};
\draw (-1.89,3.26) circle (.3cm); 
\draw [very thick]  (-1.4,3.58) -- (-1.65,3.43);
\draw (-2.07,3.25) .. controls (-1.84,3.45) and (-1.93,3.05) .. (-1.71,3.26);
\draw[very thick](-8.8,2.755)to(-8.8,2.86) ;
\draw[very thick] (-8.8,2.655) node [ground, rotate=-90] {};
\draw (-8.8,3.15) circle (.3cm); 
\draw [very thick]  (-8.05,3.35) -- (-8.51,3.25);
\draw (-8.98,3.14) .. controls (-8.75,3.34) and (-8.84,2.94) .. (-8.62,3.15);
\draw[very thick] (-7.8,1.645) node [ground, rotate=-90] {};
\draw[very thick](-7.8,1.745)to[resistor](-7.8,3.02);
\draw[very thick](-3.92,1.915)to(-3.92,2.01) ;
\draw[very thick] (-3.92,1.815) node [ground, rotate=-90] {};
\draw (-3.92,2.3) circle (.3cm); 
\draw [very thick]  (-4.22,3) -- (-3.94,2.58);
\draw (-4.1,2.29) .. controls (-3.87,2.49) and (-3.96,2.09) .. (-3.74,2.3);
\draw[very thick,fill=red] (-7.49,3.645) rectangle (-8.09,3.035) node[pos=.5]  {r1};
\draw[very thick] (-4.52,1.625) node [ground, rotate=-90] {};
\draw[very thick](-4.52,1.725)to[resistor](-4.52,3);
\draw[very thick,fill=red] (-4.21,3.625) rectangle (-4.81,3.015) node[pos=.5]  {r2};
\draw[very thick,fill=red] (-6,1.59) rectangle (-6.59,1.075) node[pos=.5]  {r3};
\draw[very thick](-7.29,0.785)to(-7.29,0.88) ;
\draw[very thick] (-7.29,0.685) node [ground, rotate=-90] {};
\draw (-7.29,1.17) circle (.3cm); 
\draw [very thick]  (-6.57,1.37) -- (-7,1.27);
\draw (-7.47,1.16) .. controls (-7.24,1.36) and (-7.33,0.96) .. (-7.11,1.17);
\draw[very thick](-1.92,1.25)to[resistor](-6,1.25);
\draw[very thick](-4.82,3.31)to[resistor](-7.5,3.31);
\draw[very thick](-2.88,5.12)to[resistor](-5.2,5.345);
\draw[very thick](-5.82,5.385)to[resistor](-7.77,3.66);
\draw[very thick](-5.28,5.195)to[resistor](-4.52,3.6);
\draw[very thick](-2.91,4.93)to[resistor](-4.31,3.62);
\draw[very thick](-2.4,4.75)to[resistor](-1.28,3.93);
\draw[very thick](0.3,4.2)to[resistor](-0.83,3.76);
\draw[very thick](0.41,3.94)to[resistor](-1.34,1.4);
\draw[very thick](-1.2,3.36)to[resistor](-1.57,1.54);
\draw[very thick](-4.19,3.34)to[resistor](-1.75,1.55);
\draw[very thick](-4.81,3.03)to[resistor](-6.26,1.6);
\draw[very thick](-7.48,3.16)to[resistor](-6.55,1.58);
\end{tikzpicture}
}
\caption{Resistive micro-grid in a network representation, comprised by loads/consumers \protect\marksymbol{*}{blue!60} and generators/retailers \protect\marksymbol{square*}{red} , resistive distribution lines, and shunt conductances.}
\label{p4fig:resdorf}
\end{figure}
We consider islanded micro-grids since these allow to reframe the application within the proposed multiple retail scenario\footnote{The grid connected case can be addressed by adding an extra retailer node with additional properties such as wider generation capabilities.}. We consider a network where both retailers and consumers provide grid support by operation under $P\sim V$ droop control. The droop controller used in our setting is similar in functionality to the traditional one \cite{Wang}, albeit this version uses bounded integrators \cite{Konstantopoulos2016} which are capable of voltage constraint satisfaction. The dynamics of each node $i\in\mc N$ are
\begin{equation}
  \begin{split}
    \tau_{v,i}\dot V_i & =   \big({V^*} - V_i  + \\
    & - {k_i} \sum_{j\in\mc{N}}G_{ij}V_iV_j + k_i {P_i^\ts{set}}\big) \bigg(1-\frac{(V_i-V^*)^2}{\Delta V_i^2}\bigg),
  \end{split}
\label{p4eq:droopy2}
\end{equation}
where $\tau_{v,i},k_i>0$ are the time constant and droop coefficient, $V^*\in\Rset$ is the rated voltage, and $\Delta V_i>0$ define desired voltage bounds. The following result shows that voltages remain within prespecified bounds.
\begin{lemma}
  If $V_i (0) \in(V^* -\Delta V,V^* +\Delta V)$, $\forall i \in \mathcal{N}$, then $\forall t\geq 0$ and $\forall i \in \mathcal{N}$, $V_i (t) \in[V^* -\Delta V,V^* +\Delta V]$.
  \label{lem:constraints}
\end{lemma}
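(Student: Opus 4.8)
The plan is to exploit the barrier structure of the bounded integrator in \eqref{p4eq:droopy2}, in the spirit of the bounded integral control framework of \cite{Konstantopoulos2016,Konstantopoulos2019a}. I would first write the right-hand side as $\tau_{v,i}\dot V_i = f_i(V)\,\phi_i(V_i)$, where $f_i(V) = V^* - V_i - k_i\sum_{j\in\mc N}G_{ij}V_iV_j + k_iP_i^\ts{set}$ collects the droop terms and $\phi_i(V_i) = 1 - (V_i-V^*)^2/\Delta V_i^2$ is the saturating factor. The decisive observation is that $\phi_i$ vanishes exactly at the two boundary values $V_i = V^*\pm\Delta V_i$ and is strictly positive strictly in between. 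Since $f_i$ is polynomial in the state, the full vector field is locally Lipschitz, so \eqref{p4eq:droopy2} admits a unique local solution from any initial condition by the Picard--Lindel\"of theorem.

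Next I would establish forward invariance of the open box $\prod_{i\in\mc N}(V^*-\Delta V_i,\, V^*+\Delta V_i)$. The key point is that on each boundary face $\{V_i = V^*+\Delta V_i\}$, and likewise $\{V_i = V^*-\Delta V_i\}$, the factor $\phi_i$ is zero, hence $\dot V_i = 0$ there \emph{irrespective of the values of the remaining coordinates} $V_j$, $j\neq i$. Consequently each such hyperplane is an invariant manifold of the flow: the trajectory with $V_i$ frozen at the boundary value and the other coordinates evolving freely is a genuine solution of \eqref{p4eq:droopy2}. By uniqueness of solutions, any trajectory initialised with $V_i(0)\in(V^*-\Delta V_i,\, V^*+\Delta V_i)$ cannot reach either hyperplane in finite time, since otherwise it would have to coincide with the frozen-boundary solution, contradicting its strictly interior start. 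Equivalently, one may invoke Nagumo's subtangentiality condition: the normal velocity $\dot V_i$ vanishes on every face, so the vector field is tangent to the boundary of the closed box and the box is forward invariant.

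Having confined the trajectory to the compact box $\prod_{i\in\mc N}[V^*-\Delta V_i,\, V^*+\Delta V_i]$, I would then observe that the vector field is continuous and hence bounded on this compact set, which rules out finite escape time and extends the unique solution to all $t\geq 0$. This yields $V_i(t)\in[V^*-\Delta V_i,\, V^*+\Delta V_i]$ for every $i\in\mc N$ and every $t\geq 0$, which is precisely the claimed bound, with $\Delta V_i$ playing the role of $\Delta V$ for node $i$.

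The main obstacle is the coupling between nodes through the bilinear term $G_{ij}V_iV_j$: it forbids treating each node's equation as an isolated scalar autonomous ODE, so a naive one-dimensional comparison argument does not transfer directly. The resolution, which is the conceptual heart of the proof, is that invariance of each coordinate's bound requires no decoupling at all---the boundary-vanishing of $\phi_i$ forces $\dot V_i=0$ on the face $V_i=V^*\pm\Delta V_i$ regardless of the neighbouring voltages, so the coupling term $f_i(V)$ need only remain finite, which it does being polynomial, for the product $f_i(V)\phi_i(V_i)$ to vanish on the boundary.
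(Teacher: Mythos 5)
Your proposal is correct and rests on exactly the same key observation as the paper's own proof: the saturating factor $1-(V_i-V^*)^2/\Delta V_i^2$ vanishes on the boundary faces $V_i = V^*\pm\Delta V_i$, forcing $\dot V_i=0$ there regardless of the other coordinates, so trajectories starting strictly inside cannot cross. The paper phrases this as a terse contradiction argument (first boundary-hitting time, then ``this point would be an equilibrium''), whereas you supply the details it leaves implicit---local Lipschitzness, uniqueness of solutions against the frozen-boundary solution (equivalently Nagumo), and global existence via compactness---so yours is a more complete rendering of the same argument rather than a different one.
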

\begin{proof}
  The proof follows from contradiction. Suppose there exists a voltage violating the constraints. By continuity of solutions, there exist $i\in\mc{N}$ and $\hat{t}>0$ such that $|V_i(\hat{t})-V^*| = \Delta V$. This point would be an equilibrium of \eqref{p4eq:droopy2}, \ie $\dot{V}_i = 0$. This implies that the voltage trajectory $V_i(t)$ cannot leave the set $(V^* -\Delta V,V^* +\Delta V)$. This proves our result.
\end{proof}
The vector $P^{set}$ contains the power reference (demanded or consumed) for each node. For consumers, the $P_i^\ts{set}$ contains the first order demand response \cite{GenisMendoza2019,pricingDR} for all $b\in\mc{B}$
\begin{equation}
  \tau_i \dot P_{b}^\ts{set} =P_{b}^d- P_{b}^\ts{set},
  \label{p4eq:fodyn}
\end{equation}
where $\tau_i >0$ is the associated time constant and the input $P_b^d$ is the output of \eqref{p4eq:optcons_mi}. The overall state vector is $x = \{V_{b}, V_{r},P_{b}^\ts{set}\}_{b \in \mathcal{B},r \in \mathcal{R}}$.
\subsection{Physical Stability Analysis}
In this section, we analyse the stability of the micro-grid given by \eqref{p4eq:droopy2} and \eqref{p4eq:fodyn}. To ensure that our problem is well posed, we require the following assumption:
\begin{assumption} \label{p4ass:eqprelax}
  For constant inputs $ P_r ^\ts{set}\in\Rset$ for all $r\in\mc{R}$ and $P_b^d$ for all $b\in\mc{B}$, there exists an equilibrium point $\{\bar V_{{b}},\bar V_{{r}},\bar P_{{b}}^\ts{set}\}$ $\forall  b \in \mathcal{B},r \in \mathcal{R}$ for the coupled system \eqref{p4eq:droopy2}-\eqref{p4eq:fodyn} such that $|V_i-V^*|\leq \Delta V $ for all $i\in\mc{N}$.
\end{assumption}
If Assumption~\ref{p4ass:eqprelax} does not hold, the equilibrium voltage would lie, following Lemma~\ref{lem:constraints}, on the boundary of the constraint sets. 
\begin{proposition}
  Suppose Assumption \ref{p4ass:eqprelax} holds. The solutions of \eqref{p4eq:droopy2}-\eqref{p4eq:fodyn} satisfy $\lim_{t\to\infty}|x-\bar{x}| = 0$  with $\bar{x} = \{\bar V_{{b}},\bar V_{{r}},\bar P_{{b}}^\ts{set}\}_{b\in\mc{B},r\in\mc{R}}$ and $x(0)\in\mc{A}$ a neighbourhood of $\bar{x}$ if for all $i\in\mc{N}$
  \begin{equation}
    k_i\Delta V \sum_{j\in\mc V}\frac{1}{R_{ij}} < \frac{1}{2} + k_i\frac{V^*}{R_{ii}}.
    \label{p4eq:conddiscq}
  \end{equation}
\end{proposition}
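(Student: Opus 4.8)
The plan is to establish local asymptotic stability by Lyapunov's indirect method, i.e.\ by showing that the Jacobian of the coupled system \eqref{p4eq:droopy2}--\eqref{p4eq:fodyn} evaluated at $\bar x$ is Hurwitz whenever \eqref{p4eq:conddiscq} holds. Write each voltage equation as $\tau_{v,i}\dot V_i = f_i(V,P^\ts{set})\,g_i(V_i)$ with $f_i = V^*-V_i-k_i\sum_{j}G_{ij}V_iV_j + k_iP_i^\ts{set}$ and $g_i = 1-(V_i-V^*)^2/\Delta V_i^2$. The first observation I would exploit is that Assumption~\ref{p4ass:eqprelax} places $\bar V_i$ strictly inside the voltage band, so $g_i(\bar V_i)>0$; since $\dot V_i=0$ at equilibrium, this forces $f_i(\bar x)=0$. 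Consequently, when differentiating the product $f_ig_i$, the term carrying $\partial g_i$ is multiplied by $f_i(\bar x)=0$ and vanishes, so the $i$-th row of the voltage Jacobian reduces to the strictly positive scalar $g_i(\bar V_i)/\tau_{v,i}$ times the gradient of $f_i$.

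Next I would use the block structure of the linearisation. Ordering the states as $(V,P^\ts{set})$, the demand dynamics \eqref{p4eq:fodyn} with constant $P_b^d$ are autonomous and decoupled from the voltages, contributing a diagonal block $-\ts{diag}(1/\tau_i)$, which is trivially Hurwitz; moreover the voltages depend on $P^\ts{set}$ but not conversely, so the Jacobian is block upper-triangular and its spectrum is the union of the two diagonal blocks. It therefore suffices to show that the voltage block $J_{VV}$ is Hurwitz.

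For $J_{VV}$ I would invoke Gershgorin's theorem and prove strict diagonal dominance with negative diagonal. Since $J_{VV}$ equals a positive diagonal matrix (the factors $g_i(\bar V_i)/\tau_{v,i}$) times the Jacobian $M$ of $f$, this scaling cancels in the dominance inequality, so it is enough to show $-M_{ii}>\sum_{k\neq i}|M_{ik}|$. Using $G=\mc{L}+R^{-1}$, so that $G_{ik}=-1/R_{ik}$ for $k\neq i$ and $G_{ii}=\sum_{k\neq i}1/R_{ik}+1/R_{ii}$, one computes $M_{ii}=-1-k_i[(G\bar V)_i+G_{ii}\bar V_i]$ and $M_{ik}=-k_iG_{ik}\bar V_i$. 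The crucial algebraic simplification is that the off-diagonal absolute row sum $k_i\bar V_i\sum_{k\neq i}1/R_{ik}$ combines with part of the diagonal to leave $-M_{ii}-\sum_{k\neq i}|M_{ik}| = 1+2k_i\bar V_i/R_{ii}+k_i\sum_{k\neq i}(\bar V_i-\bar V_k)/R_{ik}$. Bounding $\bar V_i-\bar V_k\geq -2\Delta V$ and $\bar V_i\geq V^*-\Delta V$ (both from Lemma~\ref{lem:constraints}) and dividing by two yields exactly \eqref{p4eq:conddiscq}; positive row scaling preserves diagonal dominance, so $J_{VV}$, and hence the full Jacobian, are Hurwitz, giving $\lim_{t\to\infty}|x-\bar x|=0$ on a neighbourhood $\mc{A}$ of $\bar x$.

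The main obstacle I anticipate is the bookkeeping in this last step: correctly pairing each off-diagonal Gershgorin radius with the matching part of the diagonal so as to produce the telescoping differences $\bar V_i-\bar V_k$, and then selecting the worst-case signs of the voltage deviations so that the a~priori bounds $|\bar V_i-V^*|\le\Delta V$ tighten precisely to the factor $\tfrac12$ and the term $k_iV^*/R_{ii}$ in \eqref{p4eq:conddiscq}. A secondary technical point is confirming $\bar V_i>0$ and $M_{ii}<0$, so that the Gershgorin discs indeed lie in the open left half-plane; positivity of the voltages near $V^*>0$ settles this.
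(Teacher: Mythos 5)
Your proposal is correct and follows essentially the same route as the paper: Lyapunov's indirect method, the block upper-triangular Jacobian whose demand block $-\ts{diag}(1/\tau_i)$ is trivially Hurwitz, and Gershgorin discs applied to the voltage block after factoring out the positive scalars $g_i(\bar V_i)/\tau_{v,i}$. If anything, your version is more complete than the paper's, since you make explicit both why the $\partial g_i$ term vanishes at $\bar x$ (because $f_i(\bar x)=0$) and the diagonal-dominance algebra with the worst-case bounds $|\bar V_i - V^*|\le \Delta V$ that actually produces condition \eqref{p4eq:conddiscq}, steps the paper leaves implicit.
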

\begin{proof}
  The Jacobian of \eqref{p4eq:droopy2}-\eqref{p4eq:fodyn} at  $\bar{x}\in\Rset^{|\mc N|+|\mc B|}$ is:
\begin{equation}\label{p4eq:jacobianBIC}
  J(\bar{x}) =\begin{bmatrix}\Upsilon&-\kappa\\
  \mathbf{0} &  -\tau^{-1}
  \end{bmatrix},
\end{equation}
where $\Upsilon = \frac{\partial f_v}{\partial V}\bigl\vert_{\bar{x}}\in\Rset^{|\mc N|\times|\mc N |}$, $\kappa=\frac{\partial f_v}{\partial P^\ts{set}}\bigl\vert_{\bar{x}}\in\Rset^{|\mc N|\times |\mc B|}$, and $\tau\in\Rset^{|\mc B|\times |\mc B|}$ is a diagonal matrix of time constants and $f_v(\cdot,\cdot)$ is the voltage vector field of \eqref{p4eq:droopy2}. The required stability condition follows from the eigenvalues of $J(\bar{x})$; negative eigenvalues guarantee a stable system at $\bar{x}$. We exploit the structure of $J(\bar{x})$ to compute its eigenvalues\footnote{$\mbb{I}_n$ denotes the identity in $\Rset^n$.},
\[\det(\eta\mbb{I}_{|\mc N|+|\mc B|} - J(\bar{x})) = \det(\eta\mbb{I}_{|\mc N|} - \Upsilon)\det(\eta\mbb{I}_{|\mc B|}+\tau^{-1}).\]
Clearly, $-\tau^{-1}$ contributes only negative eigenvalues, we, therefore, only analyse the behaviour of the first factor. We note that the quantity $\xi = (1- (V^*-V_i)^2/\Delta V_i^2)$ is positive by Assumption~\ref{p4ass:eqprelax} for all $i\in\mc{N}$, and $\Upsilon$ can be factored as $\Upsilon = \xi \tilde{\Upsilon}$ where $\tilde \Upsilon_{ii} = c_i\tau_{v,i}^{-1}(-1 - k_i(\sum_{j\neq i}G_{ij}V_j + 2G_{ii}V_i))$ and $\tilde\Upsilon_{ij} = -c_i\tau_{v,i}^{-1}k_iG_{ij}V_i$. As a result, the determinant we seek is: $\det(\eta\mbb{I}_{|\mc N|} - \tilde\Upsilon)$. We use Gershgorin discs $\bm{\Delta}_i(C_i,R_i)$ with centre at $C_i=\tilde \Upsilon_{ii}$ and radius $\varepsilon_i=\sum_{j\neq i}|{\tilde{\Upsilon}_{ij}}|$ to analyse the eigenvalue location. If condition~\eqref{p4eq:conddiscq} holds, the eigenvalues of $\ts{diag}(\{\tilde\Upsilon_{ii}\}_{i\in\mc{N}})$ lie on the left side of the complex plane. The equilibrium $\bar{x}$ is asymptotically stable in a neighbourhood $\mc{A}\subset\Rset^{|\mc{N}|+|\mc{B}|}$ where the linearisation holds.
\end{proof}
The above results allow us to check the stability of the micro-grid based on \eqref{p4eq:conddiscq} without evaluating directly the equilibrium point which may be difficult to compute; a computation of the equilibrium point requires a solution of the power flow equations. Furthermore, using a continuity argument, our result holds for equilibrium points on the boundary of $(V^*-\Delta V_i,V^*+\Delta V_i)$. 
\section{Simulations}\label{p4sec:simulations}
 \subsection{Coalition Formation and Profit Calculation}\label{p4sec:sim1}
 To illustrate our scheme, we have formulated two scenarios. The first one consists of a micro-grid with a single retailer, whereas the second one considers two additional retailers; both scenarios have five consumers $\mc{B} = \{b_1,\ldots,b_5\}$. The parameters for both scenarios are listed in Table \ref{p4tab:param} and their cost networks of different retailers are shown in Fig. \ref{p4fig:CostNetworks}. The games are played periodically, we use a period of $10$ seconds for both scenarios.
\begin{table}
  \caption{Parameters for Retailers and Consumers.\label{p4tab:param}}
  \centering
  \begin{tabular}{cccccc}
    \hline 
\textbf{Retailer} & $ \alpha_{{r}_i}$ & $\kappa_{{r}_i}$ & $ \underline\lambda_{i}$ & $\overline \lambda_i $& $P^g_{i}$\\ \hline
$r_1$             & 1e-4 $\$^\frac{1}{2}$               & 65 \$                   & 0.01 \$/W                         & 4 \$/W                       & 30 kW                   \\
$r_2$             & 7e-5 $\$^\frac{1}{2}$                & 64 \$                 & 0.01 \$/W                         & 2 \$/W                       & 30 kW                    \\
$r_3$             & 5e-5 $\$^\frac{1}{2}$                & 63 \$                & 0.01 \$/W                         & 3.5 \$/W                     & 30 kW                    \\ \hline
&&&&\\
\end{tabular}
\begin{tabular}{ccccc}
\hline
\textbf{Consumer} & $ \alpha_{{b}_j}$ & $P^{d_{rated}}_{{b}_j}$& $\underline \zeta_{{b}_j}$ & $\overline \zeta_{{b}_j}$  \\ \hline
$b_1$             & 1800 W$^6$               & 3 kW& 0 kW                           & 6 kW                                        \\
$b_2$             & 150 W$^6$                  & 3.5 kW& 0 kW                            & 7 kW                                         \\
$b_3$             & 140 W$^6$                 & 2.8 kW& 0  kW                           & 5.6 kW                                       \\
$b_4$             & 100 W$^6$                 & 4 kW& 0 kW                            & 8 kW                                         \\
$b_5$             & 1600 W$^6$               & 1.5 kW& 0 kW                             & 3 kW                                       \\ \hline
\end{tabular}
\end{table}
\begin{figure}[t!]
\centering
\vspace{-2mm}
\begin{tikzpicture}[scale=0.8]
\draw[ultra thick] (-1.6,18.15) circle (.3) node[anchor=center] {$b_3$};	   
\draw[ultra thick] (-0.2,18.5) circle (.3) node[anchor=center] {$b_4$};	   
\draw[ultra thick] (0.7,19.65) circle (.3) node[anchor=center] {$b_5$};	
\draw[ultra thick](-1.6,21.15) circle (.3) node[anchor=center] {$b_1$};
\draw[ultra thick] (-0.15,20.7) circle (.3) node[anchor=center] {$b_2$};
\draw[blue, ultra thick] (-3.5,19.65) circle (.3) node[anchor=center] {$r_1$};
\draw[blue] (-2.6764,21) node[anchor=center] {550};
\draw[blue](-0.95,18.95) node[anchor=center] {475};
\draw[blue] (-0.5,19.75) node[anchor=center] {535};
\draw[blue](-1.9088,19.8588) node[anchor=center] {300};
\draw[blue](0.8176,20.4) node[anchor=center] {280};
\draw[blue](0.8,18.85) node[anchor=center] {245};
\draw[blue](-0.95,20.7) node[anchor=center] {540};
\draw[blue](-2.9,18.5) node[anchor=center] {455};
\draw[blue, ultra thick](-3.25,19.8)to (-0.45,20.6);
\draw[blue, ultra thick](-3.3,19.4)to (-0.5,18.55);
\draw[blue, ultra thick](-1.9,21.15)to (-3.5,19.95);
\draw[blue, ultra thick](-1.6,18.45)to (-1.6,20.85);
\draw[blue, ultra thick](0.7,19.95)to (0.15,20.65);
\draw[blue, ultra thick](0.1,18.45)to (0.65,19.35);
\draw[blue, ultra thick](-3.5,19.35)to (-1.9,18.15);
\draw[blue, ultra thick](0.4,19.55)to (-3.2,19.55);
\draw[ultra thick] (2.5,16.05) circle (.3) node[anchor=center] {$b_3$};	   
\draw[ultra thick] (3.6,16.4) circle (.3) node[anchor=center] {$b_4$};	   
\draw[ultra thick] (4.9,17.55) circle (.3) node[anchor=center] {$b_5$};	
\draw[ultra thick](2.6,19.05) circle (.3) node[anchor=center] {$b_1$};
\draw[ultra thick] (3.55,18) circle (.3) node[anchor=center] {$b_2$};
\draw[magenta, ultra thick] (0.7,17.55) circle (.3) node[anchor=center] {$r_3$};
\draw[magenta] (1.6,18.9) node[anchor=center] {460};
\draw[magenta](2.3,17.1) node[anchor=center] {495};
\draw[magenta] (3.1,17.25) node[anchor=center] {525};
\draw[magenta](3.9176,17.05) node[anchor=center] {230};
\draw[magenta](4.1,18.7) node[anchor=center] {400};
\draw[magenta](3.3264,15.8588) node[anchor=center] {340};
\draw[magenta](2.2,18) node[anchor=center] {540};
\draw[magenta](2.7236,18.5) node[anchor=center] {250};
\draw[magenta](1.62,16.1) node[anchor=center] {500};
\draw[magenta, ultra thick](0.95,17.7)to (3.25,17.9);
\draw[magenta, ultra thick](0.9,17.3)to (3.3,16.45);
\draw[magenta, ultra thick](2.3,19.05)to (0.7,17.85);
\draw[magenta, ultra thick](3.55,16.7)to (3.55,17.7);
\draw[magenta, ultra thick](3.5,18.3)to (2.85,18.85);
\draw[magenta, ultra thick](4.8,17.85)to (2.85,19.25);
\draw[magenta, ultra thick](2.8,15.95)to (3.4,16.2);
\draw[magenta, ultra thick](0.7,17.25)to (2.2,16.05);
\draw[magenta, ultra thick](4.6,17.45)to (1,17.45);
\draw[ultra thick] (-1.6,14.15) circle (.3) node[anchor=center] {$b_3$};	   
\draw[ultra thick] (-0.2,14.5) circle (.3) node[anchor=center] {$b_4$};	   
\draw[ultra thick] (0.7,15.65) circle (.3) node[anchor=center] {$b_5$};	
\draw[ultra thick](-1.6,17.15) circle (.3) node[anchor=center] {$b_1$};
\draw[ultra thick] (-0.15,16.7) circle (.3) node[anchor=center] {$b_2$};
\draw[black!60!green, ultra thick] (-3.5,15.65) circle (.3) node[anchor=center] {$r_2$};
\draw [black!60!green] (-2.65,17) node[anchor=center] {505};
\draw [black!60!green](-1.6412,15.15) node[anchor=center] {520};
\draw [black!60!green] (-1.65,15.75) node[anchor=center] {515};
\draw  [black!60!green](-0.9,16.9) node[anchor=center] {425};
\draw  [black!60!green](0.8,16.4) node[anchor=center] {225};
\draw  [black!60!green](0.8264,14.9) node[anchor=center] {350};
\draw  [black!60!green](-0.05,16) node[anchor=center] {365};
\draw  [black!60!green](-2.3,16.3) node[anchor=center] {510};
\draw  [black!60!green](-2.9,14.5) node[anchor=center] {525};
\draw[black!60!green, ultra thick](-3.25,15.8)to (-0.45,16.6);
\draw[black!60!green, ultra thick](-3.3,15.4)to (-0.5,14.55);
\draw[black!60!green, ultra thick](-1.9,17.15)to (-3.5,15.95);
\draw[black!60!green, ultra thick](-0.35,14.8)to (-1.4,16.9);
\draw[black!60!green, ultra thick](0.7,15.95)to (0.15,16.65);
\draw[black!60!green, ultra thick](0.1,14.45)to (0.65,15.35);
\draw[black!60!green, ultra thick](-1.45,14.4)to (-0.25,16.4);
\draw[black!60!green, ultra thick](-3.5,15.35)to (-1.9,14.15);
\draw[black!60!green, ultra thick](0.4,15.55)to (-3.2,15.55);
\end{tikzpicture}
\vspace{-3mm}
\caption{Cost networks defined by each retailer for the same set of consumers.}\label{p4fig:CostNetworks}
\vspace{-3mm}
\end{figure}
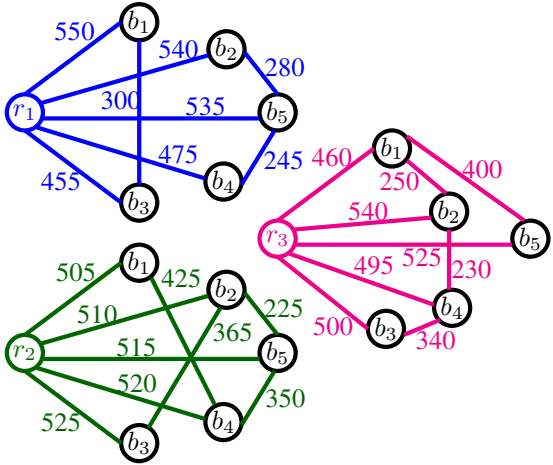
Figure \ref{p4fig:PricePower} shows the rationality of players, \ie consumers tend to consume more (less) given a lower (higher) price, and that retailers tend to lower (raise) their price when consumption is low (high). This is more evident in the single retailer scenario where both prices and demands settle. In addition, Fig. \ref{p4fig:PricePower} shows that some coalitions choose not to consume, this behaviour is the result of consumers choosing a coalition maximising their profit and leaving some retailers without consumers. 
\begin{figure}[t!]
\centering
\includegraphics[scale=0.65]{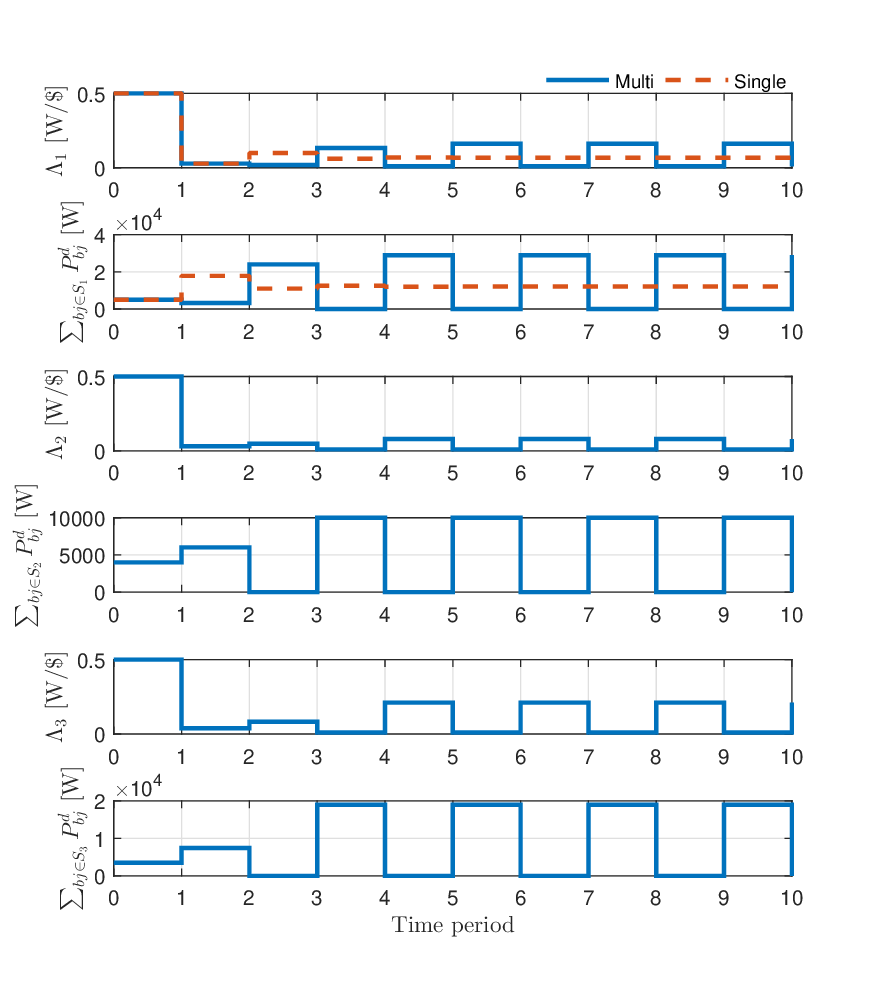}
\vspace{-1.4cm}
\caption{Price and coalition consumption responses over time.}\label{p4fig:PricePower}
\end{figure}
Figure \ref{p4fig:IndConsumptions} shows individual power demand for all $b\in \mc B$; each consumer is able to consume more power in the multiple retailer scenario. Each consumer power demand settles above their rated consumption even for those who do not prioritise consumption, \ie $\alpha_b$ is small. The profit obtained by each consumer, see Fig.~\ref{p4fig:BuyerProfits}, shows that greater profits can be obtained in the latter scenario. The case of consumer $b_4$ is of particular interest: this buyer is forced to pay a connection fee in the single retailer scenario; whereas it has no longer to pay such fee when there are more retailers.
\begin{figure}[t!]
\centering
\vspace{-2mm}
\includegraphics[scale=0.65]{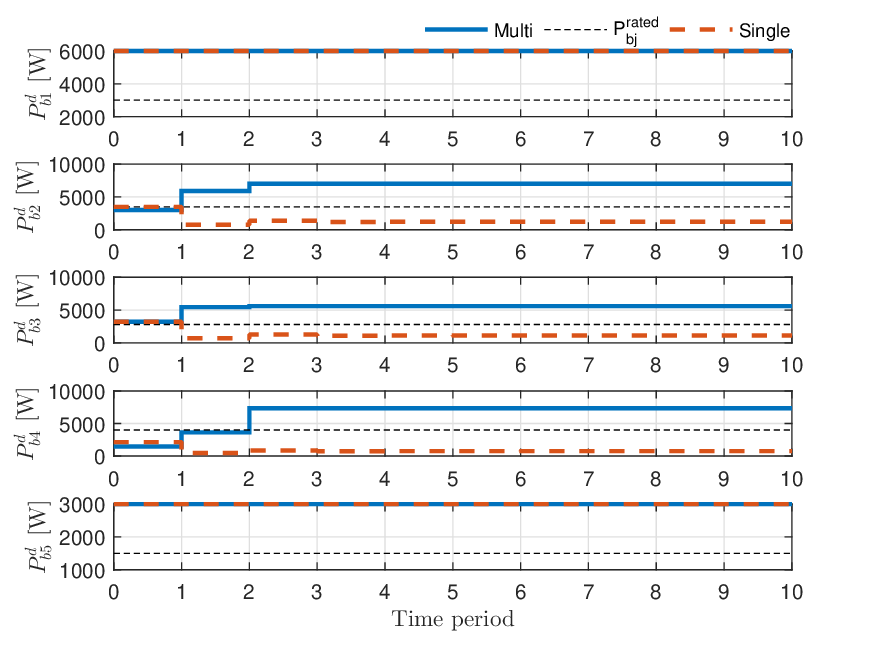}
\vspace{-11mm}
\caption{Consumers' individual power demand over time.}\label{p4fig:IndConsumptions}
\end{figure}
\begin{figure}
\centering
\includegraphics[scale=0.65]{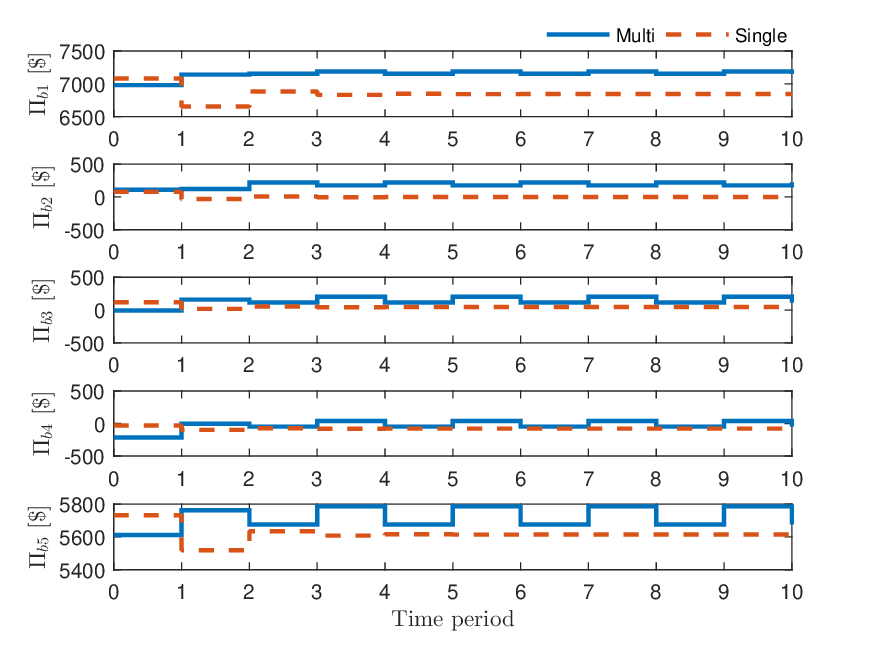}
\vspace{-11mm}
\caption{Consumers' immediate profits for every time period.}\label{p4fig:BuyerProfits}
\end{figure}
The profits of retailers, see Fig. \ref{p4fig:SellerProfits}, also change from scenario to scenario; as expected the profits of $r_1$ decrease in case of competition. When retailers fail to attract consumers, their profits can become negative as seen for $r_2$. The benefits of a multiple retailer scenario are clear from Figs.~\ref{p4fig:BuyerProfits} and~\ref{p4fig:SellerProfits}, consumers gain more from retailer competition. Lastly, Fig.~\ref{p4fig:CoalitionEvo} shows the evolution of the choice of coalition for each consumer. 
\begin{figure}[t!]
\centering
\vspace{-4mm}
\includegraphics[scale=0.65]{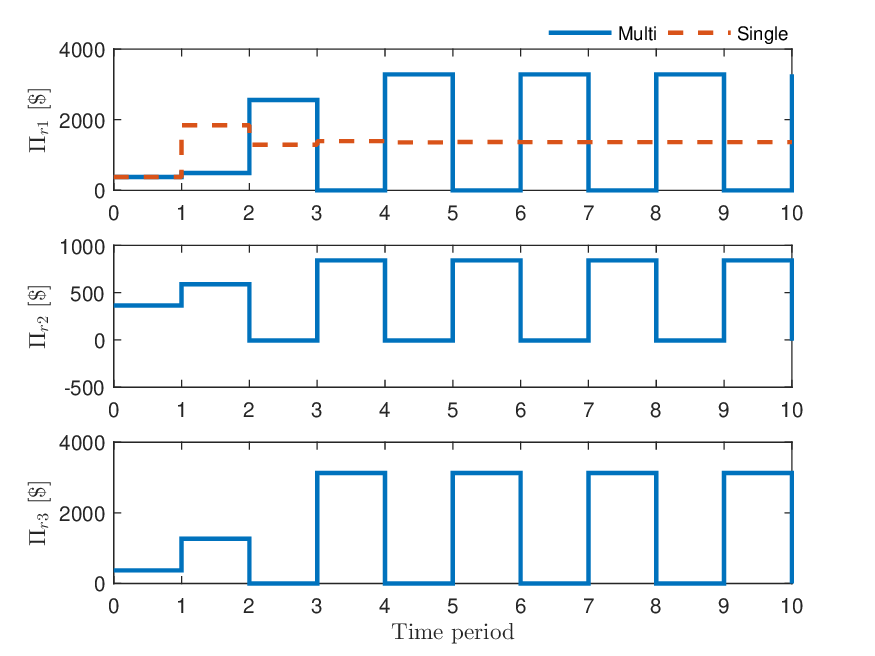}
\vspace{-9mm}
\caption{Retailers' immediate profits for every time period.}\label{p4fig:SellerProfits}
\end{figure}
%
%
\begin{figure}[t!]
\centering
\includegraphics[scale=0.65]{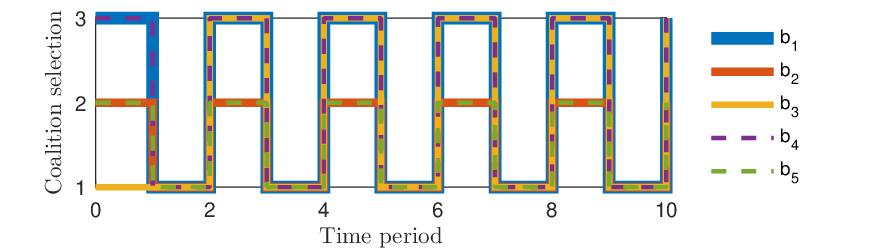}
\vspace{-8mm}
\caption{Coalition selection over time}\label{p4fig:CoalitionEvo}
\end{figure}
\subsection{Game and Physical Dynamics Integration}
In this section, we show the response of the physical system to the changing set-points arising from the proposed pricing scheme. The parameters are the following: droop control time constant $\tau_{vi}=0.1$s, load response time constant $\tau_i=3$s, nominal voltage $V^*=220$V, voltage deviation $\Delta V=11$V, droop coefficients $k_i=0.05V_i^*/P_{i}^{rated}$ and $c_i=(\pi \Delta V)/(0.1k_i P_{i}^{rated})$, where the $P_i^{rated}$ vector consists of the $P_{{r}}^g$ and $P_{{b}}^{Lrated}$ values for the corresponding retailer and consumers respectively. The coalition selection game is played every $T_S=60$s.
\begin{figure}[t!]
\centering
\includegraphics[scale=0.65]{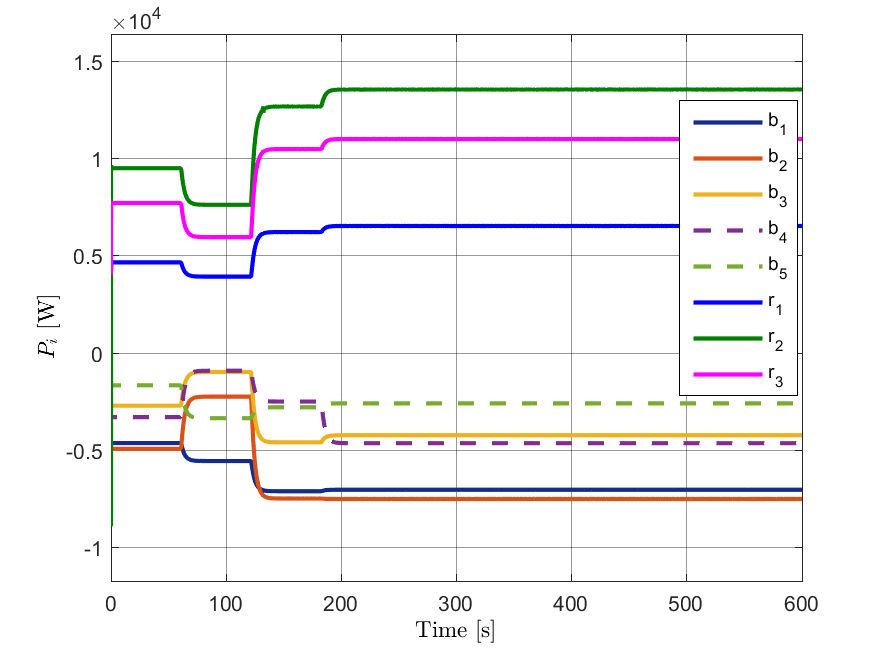}
\vspace{-8mm}
\caption{Resistive network node powers when subject to the proposed coalitional game.}\label{p4fig:physpowers}
\end{figure}
\begin{figure}[t!]
\centering
\includegraphics[scale=0.65]{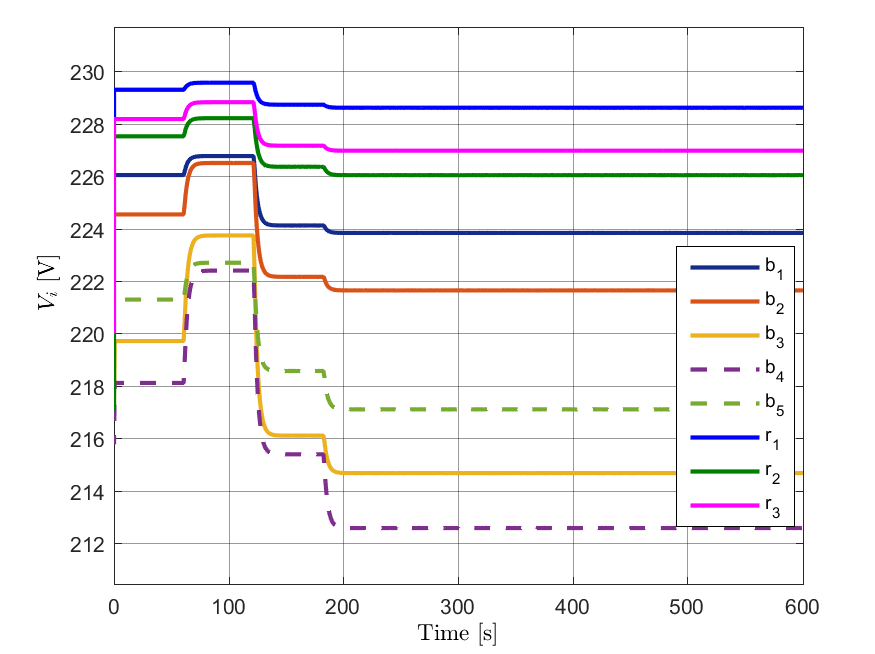}
\vspace{-8mm}
\caption{Resistive network node voltages when subject to the proposed coalitional game.}\label{p4fig:voltages}
\end{figure}
%
%
The response of the physical system to changes in power set-points, see Fig.~\ref{p4fig:physpowers} and \ref{p4fig:voltages}, for both retailers and consumers is seen through the voltage changes (voltage deviations allow for current to flow between nodes). We show that all voltages are bounded around the nominal voltage, and the power losses are small comparing Fig. \ref{p4fig:physpowers} with \ref{p4fig:IndConsumptions}. 
 %
 \section{Conclusions and Future Directions}
 \label{p4sec:theend}
In this paper we have proposed an on-line pricing scheme that encompasses the concepts of a hierarchical structure with the Stackelberg game and coalitional games with competing players. The definitions and the algorithm for the game have been established and the stability of the game has been shown. We have described a potential implementation of the present scheme integrated with the physical micro-grid dynamics, together with its corresponding stability analysis. A comparison between single and multiple retailer scenarios has been shown numerically, demonstrating the advantages of the latter from an economic point of view. Future directions of this work dwell in the incorporation of disconnection penalties, quality of the service, among other factors that can be included into the algorithm. Additionally, several approaches/variations for deriving the proposed cost networks can be investigated. Finally, with the same market setup and dynamics as basis, a plethora of optimisation methods can also be implemented for the design of novel dynamic pricing algorithms.
\bibliography{bibliography}
\bibliographystyle{IEEEtran}
\end{document}